\numberwithin{equation}{section}
\newtheorem{theorem}{Theorem}[section]
\newtheorem{definition}[theorem]{Definition}
\newtheorem{proposition}[theorem]{Proposition}
\newtheorem{lemma}[theorem]{Lemma}
\newtheorem{corollary}[theorem]{Corollary}
\newtheorem{example}[theorem]{Example}
\newtheorem{remark}[theorem]{Remark}
\newtheorem*{proposition*}{Proposition}
\newcommand{\R}{\mathbb{R}}  
\newcommand{\Z}{\mathbb{Z}}
\newcommand{\N}{\mathbb{N}}
\newcommand{\e}{\epsilon}
\newcommand{\hk}{\mathsf{h}}
\def\cC{\mathcal C}
\def\ssu{\subset}
\def\rr{\mathbb R}
\newcommand{\tiling}{\mathsf{t}}
\newcommand{\Zptn}{Z}
\newcommand{\numtilings}{N}
\newcommand{\numlozenge}{\mathcal{L}}
\DeclareMathOperator{\Lip}{Lip}
\DeclareMathOperator{\weight}{wt}
\DeclareMathOperator{\hookwts}{hooks}
\DeclareMathOperator{\Lob}{\Lambda}
\def\area{{\textrm{area}}}
\newcommand{\SYT}{\operatorname{SYT}}
\newcommand{\Hex}{\operatorname{\mathbb H}}
\def\emp{\varnothing}
\def\rr{\mathbb R}
\def\la{\lambda}
\def\al{\alpha}
\def\be{\beta}
\def\cC{\mathcal C}
\def\ssu{\subset}
\def\wt{\widetilde}
\def\<{\langle}
\def\>{\rangle}
\def\Ups{\Upsilon}
\def\0{{\mathbf 0}}
\def\PP{\textup{\textsf{P}}}
\def\PP{{\textup{\textsf{P}}}}
\newcommand{\ED}{\mathcal{E}}
\def\.{\hskip.06cm}
\def\ts{\hskip.03cm}
\def\nin{\noindent}
\begin{document}

	\title[Standard Young tableaux and weighted lozenge tilings]
    {Asymptotics for the number of standard tableaux of skew shape and for weighted lozenge tilings}
	
	\author{Alejandro H. Morales}
	\address{Department of Mathematics and Statistics, UMass, Amherst, MA}
	\email{amorales@math.umass.edu}
	
	\author{Igor Pak}
	\address{Department of Mathematics, University of California, Los
		Angeles, CA}
	\email{pak@math.ucla.edu}

	\author{Martin Tassy}
	\address{Department of Mathematics, Dartmouth College, Hanover, NH}
	\email{mtassy@math.dartmouth.edu}
	
\date{\today}	
	
\maketitle

\begin{abstract}
We prove and generalize a conjecture in~\cite{MPP4} about the asymptotics
of \ts $\frac{1}{\sqrt{n!}} f^{\la/\mu}$, where $f^{\la/\mu}$ is the number
of standard Young tableaux of skew shape $\la/\mu$ which have stable
limit shape under the \ts $1/\sqrt{n}$ \ts scaling.  The proof is based
on the variational principle on the partition function of certain weighted
lozenge tilings.
\end{abstract}

\vskip.7cm
	
\section{Introduction}

In enumerative and algebraic combinatorics, \emph{Young tableaux} are
fundamental objects that have been studied for over a century
with a remarkable variety of both results and applications to other fields.
The asymptotic study of the number of standard Young tableaux is an
interesting area in its own right, motivated by both probabilistic
combinatorics (\emph{longest increasing subsequences}) and
representation theory.  This paper is a surprising new advance in this
direction, representing a progress which until recently could not be
obtained by existing tools.

\medskip

\subsection{Main results}
Let us begin by telling the story behind this paper.  Denote by
$f^{\la/\mu}=\SYT(\la/\mu)$ the number of standard Young tableaux of skew
shape~$\la/\mu$.  There is \emph{Feit's determinant formula} for $f^{\la/\mu}$,
which can also be derived from the Jacobi--Trudy identity for skew shapes.
In some cases there are multiplicative formulas for $f^{\la/\mu}$,
e.g.\  the \emph{hook-length formula} (HLF) when $\mu=\emp$, see also~\cite{MPP3}.
However, in general it is difficult to use Feit's formula to obtain even
the first order of asymptotics, since there is no easy way to diagonalize the
corresponding matrices.

It is easy to show (see e.g.~\cite{Pak-skew}), that when
\ts $|\la/\mu|=N$ and $\la_1,\ell(\la) \ts \le \ts s \sqrt{N}$, we have:
$$
c_1^N \. \le \. \frac{\bigl(f^{\la/\mu}\bigr)^2}{N!} \. \le \. c_2^N,
$$
where $c_1,c_2>0$ are universal constants which depend only on~$s$.
Improving upon these estimates is of interest in both
combinatorics and applications (cf.~\cite{MPP3,MPP4}).

In~\cite{MPP4}, much sharper bounds on $c_1,c_2$ were given,
when the diagrams $\la$ and $\mu$ have a limit shape $\psi/\phi$ under
$1/\sqrt{N}$ scaling in both directions (see below).
Based on observations in special cases, we conjectured that
there is always a limit
$$
\lim_{N\to \infty} \. \frac1N \. \log \frac{\bigl(f^{\la/\mu}\bigr)^2}{N!}
$$
in this setting.  The main result of this paper is a proof of this conjecture.

\begin{theorem} \label{thm:constantskew}
Let $\bigl\{\lambda^{(N)}\bigr\}$ and $\bigl\{\mu^{(N)}\bigr\}$
be two partition sequences with strongly stable \ts $($limit$)$ \ts shapes $\psi$
and $\phi$, respectively {\rm (see~$\S$\ref{def:stronglystableshape} for precise
definitions)}.  Let $\nu^{(N)} := \lambda^{(N)}/\mu^{(N)}$, such that
\ts $|\nu^{(N)}| = N + o(N/\log N)$.  Then
\[
\frac{1}{N} \left(\log f^{\nu^{(N)}} \. - \. \frac{1}{2}\ts N\ts\log N\right)
\. \longrightarrow \. c(\psi/\phi) \quad \text{as} \ \ \, N\to \infty,
\]
for some fixed constant \ts $c(\psi/\phi)$.
\end{theorem}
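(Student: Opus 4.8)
The plan is to follow the route announced in the abstract: convert the enumeration of skew standard Young tableaux into a weighted lozenge tiling problem, and then extract the exponential asymptotics through a variational principle for the associated partition function. First I would invoke the combinatorial identity established earlier in the paper to write $f^{\nu^{(N)}}$, up to an explicit normalizing factor involving $N!$, as a partition function
\[
Z_N \. = \. \sum_{\tiling} \weight(\tiling),
\]
where the sum is over lozenge tilings $\tiling$ of a region $\Omega_N$ that, under $1/\sqrt{N}$ scaling, converges to a fixed bounded domain $\Omega = \Omega(\psi/\phi)$ cut out by the limit shapes $\psi$ and $\phi$. The weights $\weight(\tiling)$ are products of local hook-type factors, and after scaling the per-lozenge weight depends on the macroscopic position $x \in \Omega$ and on the local slope (the proportions of the three lozenge types) in a way that converges to a smooth limit. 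Proving the theorem is then equivalent to proving that $\frac1N \log Z_N$ converges, the target constant $c(\psi/\phi)$ differing from the limit by an explicit additive constant. Here the hypothesis $|\nu^{(N)}| = N + o(N/\log N)$ is exactly calibrated so that the discrepancy $\bigl||\nu^{(N)}| - N\bigr|\cdot\log N$ between $\tfrac12|\nu^{(N)}|\log|\nu^{(N)}|$ and $\tfrac12 N\log N$ stays $o(N)$ after taking logarithms.

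Second, I would set up the continuum variational principle for $\frac1N\log Z_N$. Lozenge tilings of $\Omega_N$ are encoded by height functions (stepped surfaces), and under scaling these converge to Lipschitz functions $h\colon \Omega \to \R$ whose gradient is constrained to the triangle of admissible slopes and whose boundary values $h|_{\partial\Omega}$ are fixed by $\psi/\phi$. For each admissible slope there is a local entropy, namely the exponential growth rate of the number of tilings of a mesoscopic box with that boundary tilt; combined with the limiting per-lozenge weight this yields a local free-energy density $\mathcal F(\nabla h, x)$. The claim is then that
\[
\frac1N \log Z_N \. \longrightarrow \. \max_{h}\int_{\Omega} \mathcal F\bigl(\nabla h(x),\.x\bigr)\.dx \. =: \. c(\psi/\phi),
\]
the maximum being over Lipschitz height functions with the prescribed admissible gradients and boundary data. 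Strict concavity of the surface-tension part of $\mathcal F$ guarantees that the maximizer exists and is unique, so $c(\psi/\phi)$ is a well-defined finite constant depending only on $\psi/\phi$.

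The convergence of the discrete free energy to this variational value then splits, as usual, into a matching upper and lower bound. For the upper bound I would partition $\Omega$ into mesoscopic boxes, estimate the box partition functions by the local free-energy density, and use concavity of $\mathcal F$ through a Jensen-type argument to bound $\frac1N\log Z_N$ above by the integral evaluated at the empirical height profile, hence by its maximum. For the lower bound I would construct near-optimal tilings by patching together local tilings that realize the optimal slope profile on each box, controlling the weight defects created along the box boundaries; this requires showing that any admissible height function can be approximated by genuine tilings of $\Omega_N$ carrying nearly maximal weight.

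I expect the main obstacle to be precisely this passage from the discrete weighted partition function to the continuum variational problem. The crux is constructing the local free-energy density $\mathcal F$ itself: proving that the limit defining the per-box entropy exists, that it is continuous and strictly concave in the slope, and controlling its joint dependence on position. The weights make this substantially harder than the classical uniform lozenge case, since the surface tension is now position-dependent and the large-deviation and concentration estimates must be adapted to an inhomogeneous weighting. A secondary difficulty is that the boundary data coming from $\psi/\phi$ are controlled only through the strong stability hypothesis, so some care is needed to ensure that the boundary values of the admissible height functions match the discrete region $\Omega_N$ closely enough for the resulting error to remain $o(N)$.
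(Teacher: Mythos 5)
Your route is the same as the paper's: Naruse's formula in lozenge form~\eqref{eq:NaruseTiling} converts $f^{\nu^{(N)}}$ into $N!/\prod\hk_{\lambda^{(N)}}$ times a hook-weighted partition function $Z_N$; a weighted variational principle (the paper's Theorem~\ref{thm:main}, proved by exactly the CKP-style mesh/patching scheme you outline, with strict convexity of the free energy giving uniqueness of the maximizer) identifies $\lim_{N\to\infty}\frac1N\log Z_N$; and Stirling plus the $N\log N$ bookkeeping---where your reading of the $|\nu^{(N)}|=N+o(N/\log N)$ hypothesis agrees with how condition~\eqref{eq:condition_area} is used---yields the constant $c(\psi/\phi)=k(\psi)+\Psi(f_{\max})$ of Corollary~\ref{cor:explicit-constantskew}.

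There is, however, one genuine gap: you assume that after $1/\sqrt{N}$ scaling the per-lozenge log-weights ``converge to a smooth limit.'' They do not. The natural weights are $w_N(x,y)=\bigl(0,0,\log(\hk_{\lambda^{(N)}}(x,y)/\sqrt{N})\bigr)$, and for cells near the border of $\lambda^{(N)}$ the hooks are $O(1)$, so the third coordinate is of order $-\frac12\log N\to-\infty$; correspondingly the limiting density $\log\hbar(x,y)$ diverges to $-\infty$ as $(x,y)$ approaches the curve $\psi$. Thus the uniform-convergence hypothesis~\eqref{prop:defnweights} that any such variational principle requires fails for the honest weights, and the limiting functional acquires an unbounded integrand whose integrability is not automatic. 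The paper devotes Lemmas~\ref{lemm:pfNaruse1}, \ref{lemm:pfNaruse2} and~\ref{lemm:pfNaruse3} to exactly this point: cap the weights at $\log\e$, apply the variational principle to the capped partition function $Z_N^{\e}$, show $\log Z_N^{\e}=\log Z_N+F(\e)N$ with $F(\e)\to0$ via a mediant-inequality estimate on the border strip (where hooks are bounded below by their depth), and show $\sup_{f\in\Lip_{[0,1]}}|\Psi_{\e}(f)-\Psi(f)|\to0$ using that $\log\hbar$ is dominated by an integrable function on $U$. Without this truncation-and-limit step (or an equivalent regularization), your plan stalls at its very first application of the variational principle; the remainder of your outline is sound and matches Section~\ref{sec:proofwtvarprinciple}.
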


The constant $c(\psi/\phi)$ is given in Corollary~\ref{cor:explicit-constantskew}. 
The proof of the theorem is even more interesting perhaps than one would
expect.  In~\cite{Nar}, Naruse developed a novel approach to counting $f^{\la/\mu}$,
via what is now known as the \emph{Naruse hook-length formula} (NHLF):
\begin{equation} \label{eq:Naruse}
f^{\lambda/\mu} \,  = \, N! \, \sum_{D \in \ED(\lambda/\mu)}\,\.\.
 \prod_{u \in \lambda\setminus D} \. \frac{1}{\hk_{\lambda}(u)}\ts\.,
\end{equation}
where \ts $\ED(\lambda/\mu)\subseteq \binom{[\la]}{|\mu|}$ \ts is a collection of certain
subsets of the Young diagram~$[\la]$ called \emph{excited diagram} (see e.g. \cite[Sec. 2.6]{MPP3}), and $\hk_{\lambda}(u)$ is the hook-length at $u \in \la$.
The (usual) hook-length formula is a special case $\mu=\emp$.
Let us mention that \ts $\ED(\lambda/\mu)$ \ts can be viewed as the set
of certain particle configurations, giving it additional structure~\cite{MPP3}.

Although \ts $\ED(\lambda/\mu)$ \ts can have exponential size, the NHLF can
be useful in getting the asymptotic bounds~\cite{MPP4}.  It has been reproved
and studied further in~\cite{MPP1,MPP2,Kon,NO}, including the $q$-analogues
and generalizations to trees and shifted shapes. See~$\S$\ref{ss:background-naruse}
for the precise statements.

The next logical step was made in~\cite{MPP3}, where a bijection between
\ts $\ED(\lambda/\mu)$ \ts and lozenge tilings of a certain region was
constructed.  Thus, the number of standard Young tableaux \ts $f^{\la/\mu}$ \ts
can be viewed as a statistical sum of weighted lozenge tilings.  In a special
case of \emph{thick hooks} this connection is especially interesting, as the
corresponding weighted lozenge tilings were previously studied in~\cite{BGR}
(see the example below).

Now, there is a large literature on random lozenge tilings of the hexagon
and its relatives in connection with the \emph{arctic circle} phenomenon,
see \cite{CEP96,CKP01,Ken09}. In this paper we adapt the
\emph{variational principle} approach in these papers to obtain the
arctic circle behavior for the weighted tilings as well.  Putting all
these pieces together implies Theorem~\ref{thm:constantskew}.

Let us emphasize that the approach in this paper can be used
to obtain certain probabilistic information on random SYTs of
large shapes, e.g.\ in~\cite[$\S$8]{MPP3} we show how to compute
asymptotics of various path probabilities.  However, in the absence
of a direct bijective proof of NHLF, our approach cannot be easily adapted
to obtain limit shapes of SYTs as Sun has done recently~\cite{Sun}
using the \emph{beads model}, and Gordenko~\cite{Gor} using a modified
\emph{totally asymmetric simple exclusion process} (TASEP);
see also~$\S$\ref{ss:finrem-var}.

\medskip

\subsection{Thick hooks}\label{ss:intro-hooks}
Let \ts $\la = (a+c)^{b+c}$, \ts $\mu = a^b$, \ts $N=|\la/\mu|=c\ts (a+b+c)$,
where $a,b,c\ge 0$.  This shape is called the \emph{thick hook} in~\cite{MPP4}.
The HLF applied to the 180 degree rotation of $\la/\mu$ gives:
$$
f^{\la/\mu} \, = \, N! \, \. \frac{\Phi(a)\,\Phi(b)\,\Phi(c)^2
\,\Phi(a+b+c)^2}{\Phi(a+b)\,\Phi(a+c)\,\Phi(b+c)\.\Phi(a+b+2c)}\,.
$$
Here the {\em superfactorial} \ts $\Phi(n)\ts =\ts 1!\cdot 2! \ts \cdots \ts (n-1)!$ \ts is
the integer value of the \emph{Barnes $G$-function}, see e.g.~\cite{AsR}.

On the other hand, $\ED(\la/\mu)$ in this case in bijection with the set of lozenge tilings
of the hexagon \ts $\Hex(a,b,c) = \<\ts a\times b \times c \times a \times b \times c\ts\>$,
and the weight is simply a product of a linear function on horizontal lozenges (see below).
The number of lozenge tilings in this case is famously counted by the \emph{MacMahon box formula}
for the number  $\PP(a,b,c)$ of \emph{solid partitions} which fit into a
\ts $[a\times b \times c]$ \ts box:
$$
\bigl|\ED(\la/\mu)\bigr| \, = \, \PP(a,b,c) \, = \, \frac{\Phi(a)\,\Phi(b)\,\Phi(c)\, 
\Phi(a+b+c)}{\Phi(a+b)\,\Phi(b+c)\,\Phi(a+c)}\,,
$$
see e.g.~\cite[$\S$7.21]{Stanley-EC}.  It was noticed by Rains (see~\cite[$\S$9.5]{MPP3}),
that in this example our weights are special cases of multiparameter weights studied
in~\cite{BGR} in connection with closed formulas for \emph{$q$-Racah polynomials},
cf.~$\S$\ref{ss:finrem-racah}.

Now, Theorem~\ref{thm:constantskew} in this case does not give anything new, of course,
as existence of the limit when \ts $c\to \infty$, $a/c \to \al$ and $b/c\to \be$,
follows from either the Vershik--Kerov--Logan--Shepp \emph{hook integral} of the
strongly stable shapes~\cite[$\S$6.2]{MPP4} (see also~\cite{Rom}), or from the
asymptotics of the superfactorial:
$$
\log \Phi(n) \,  = \,  \frac{1}{2} \. n^2 \ts \log n \. - \. \frac34 \. n^2 \. +
\. 2\ts n \ts \log n \. + \. O(n)\ts.
$$
This gives the exact value $c(\psi/\phi)$ as an elementary function of $(\al,\be)$.

\medskip

\subsection{Thick ribbons} \label{ss:intro-ribbons}
Let $\nu_k:=(2k-1,2k-2,\ldots,2,1)/(k-1,k-2,\ldots,2,1)$.
This skew shape is a strongly stable shape. The main theorem implies that there is a limit
$$
\frac{1}{N}\ts \left(\log f^{\nu_k} \. - \. \frac{1}{2}\. N \log N \right) \, \to \, C
\quad \text{as} \ \, k \to \infty,
$$
where $N=|\nu_k| = k(3k-1)/2$.
This proves a conjecture in \cite[$\S$13.7]{MPP4}. The best known upper and lower
bounds for~$C$ calculated in \cite{Pak-skew} are
$$
-0.2368 \, \le \, C \, \le \, -0.1648\ts,
$$
but the exact value of~$C$ has no known closed formula.  This paper describes~$C$
as solution of a certain very involved variational problem
(see Corollary~\ref{cor:explicit-constantskew} and~$\S$\ref{ss:finrem-var}).

\medskip

\subsection{Structure of the paper}
We start with Section~\ref{s:background} which reviews the notation and
known results on tilings, standard Young tableaux and limit shapes.
In Section~\ref{s_main_result} we state our main technical result
(Theorem~\ref{thm:main}) on the variational principle for weighted
lozenge tilings, whose proof is postponed until Section~\ref{sec:proofwtvarprinciple}.
In the technical Section~\ref{s:count} we deduce Theorem~\ref{thm:constantskew}
from the variational principle.  We conclude with final remarks and open
problems in Section~\ref{s:finrem}.

\bigskip
\section{Background and notation} \label{s:background}

\subsection{Tilings and height functions}\label{ss:background-tilings}
Let $R$ be a connected region in the triangular lattice.
One can view  a lozenge tiling of $R$ as a stepped surface in $\R^3$
where the first two coordinates are the coordinates of the points in
the lattice and the third coordinates is the height function $h(\cdot)$ of a lozenge tiling defined in the following way:
\begin{itemize}
	\item For every edge $(x ,y)$ in $R$,  $h(y)-h(x)= 1$ if $(x,y)$ is a vertical edge and $h(y)-h(x)= 0$ otherwise.
\end{itemize}
In fact, there is a one to one correspondence between tilings of a given region and functions which verify this property defined up to a constant. Using this bijection, we will denote by $\tiling_h$ the tiling associated to a given height function $h$ and  we will do most of the subsequent reasoning using height functions rather than tilings. The \emph{position} $(x,y)$ of a horizontal lozenge in $\tiling_h$ is the coordinates of its upper corner in the triangular lattice. 

\begin{figure}[hbt]
		\centering
		\includegraphics[scale=0.68]{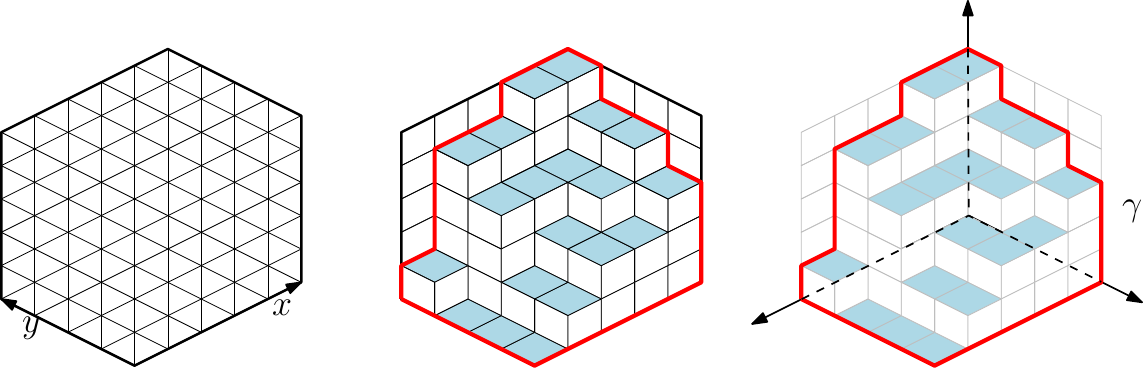}
		\caption{A region $R$ of the triangular lattice. A lozenge tiling of
			that region and the associated admissible
			stepped curve (ASC).}
		\label{fi:coloring}
	\end{figure}

We extend the definition of height functions to any region of the lattice as follows:
for general sets $S$, we say that a function $h:S \to \mathbb{Z}$ is a height function
if its restriction on each simply connected component of $S$ is a height function.

Let $R$ be a lozenge tileable region.  We say that the three dimensional curve
obtained by traveling along $\partial R$ and recording the height of each
point is an {\em admissible stepped curve} (ASC). See Figure~\ref{fi:coloring}.
	
\begin{lemma}\label{lem:extension}
Let $R$ be a connected region in the triangular grid and let $g$ be a
height function on a subset $S$ of~$R$, such that for all $x=(x_1,x_2),y = (y_1,y_2) \in S$:
\begin{equation}\label{eq:lips}	
		 g(y)-g(x) \. \leq \. \min \ts\{ y_1-x_1,y_2-x_2\}.
\end{equation}
Then $g$ can be extended into a height function on the whole region~$R$.
\end{lemma}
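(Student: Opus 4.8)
The plan is to produce the pointwise \emph{maximal} extension of $g$ explicitly, by an infimal--convolution (shortest-path) formula, and then to check separately that it restricts to $g$ on $S$ and that it is a genuine height function. I work inside a single simply connected component of $R$; this is harmless because height functions are defined component-wise, and since $R$ is tileable each component already carries at least one height function, which is what makes the auxiliary distance below finite and path-independent.

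For lattice points $x = (x_1,x_2)$ and $z = (z_1,z_2)$ in such a component, set
\[
\rho(x,z) \. := \. \max\{\ts z_1 - x_1,\ \ts z_2 - x_2 \ts\}.
\]
I would first record three properties of $\rho$. It is integer valued on the lattice and satisfies $\rho(x,x) = 0$ and the (asymmetric) triangle inequality $\rho(x,z) \le \rho(x,y) + \rho(y,z)$, the latter because $z_i - x_i = (z_i - y_i) + (y_i - x_i)$ for $i = 1,2$. Moreover $\rho$ is exactly the maximal height gain: every height function $h$ obeys $h(z) - h(x) \le \rho(x,z)$, by telescoping the admissible single-edge increments along a lattice path from $x$ to $z$, and this value is attained by some height function. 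In this language the hypothesis \eqref{eq:lips} says precisely that $g$ is $1$-Lipschitz for $\rho$ on $S$, namely $g(z) - g(x) \le \rho(x,z)$ for all $x, z \in S$.

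Next I would define the candidate extension
\[
\bar g(z) \. := \. \min_{x \in S}\bigl(\ts g(x) + \rho(x,z)\ts\bigr), \qquad z \in R,
\]
which is finite and integer valued. The agreement $\bar g|_S = g$ is exactly where \eqref{eq:lips} enters: for $z \in S$ the choice $x = z$ and $\rho(z,z) = 0$ give $\bar g(z) \le g(z)$, while the $\rho$-Lipschitz bound $g(z) \le g(x) + \rho(x,z)$ for every $x \in S$ gives $\bar g(z) \ge g(z)$. To see that $\bar g$ is a height function I would check the local rule on each lattice edge $(u,v)$: the triangle inequality through that edge gives $\rho(x,v) \le \rho(x,u) + \rho(u,v)$ and $\rho(x,u) \le \rho(x,v) + \rho(v,u)$, so after taking the minimum over $x \in S$,
\[
-\rho(v,u) \. \le \. \bar g(v) - \bar g(u) \. \le \. \rho(u,v).
\]
Since for each of the three lattice directions this pair of inequalities cuts out exactly the set of edge increments realized by height functions, $\bar g$ satisfies every edge constraint and is therefore the height function of an honest tiling via the bijection of $\S$\ref{ss:background-tilings}.

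The main obstacle is this last verification, and more precisely the underlying structural fact it rests on: that a function on the vertices is a height function \emph{if and only if} each of its edge increments lies in the interval $[-\rho(v,u),\, \rho(u,v)]$. The forward (``only if'') direction is the telescoping bound above; the delicate reverse direction says that the purely local, edge-by-edge constraints suffice, with no hidden global (monodromy) obstruction around cycles. This is where simple connectivity of the component is used, together with a short case analysis over the three edge directions confirming that $\bigl(\rho(u,v), \rho(v,u)\bigr)$ reproduces the admissible-increment data of the lattice; once this is in hand, the fact that $\bar g$ is a bona fide function forces the increments around every triangular face to sum to zero automatically. A final routine point is to treat a non-simply-connected $R$ by applying the construction to each simply connected piece and reconciling the free additive constants.
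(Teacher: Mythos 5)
Your construction is the same as the paper's: your $\bar g(z)=\min_{x\in S}\bigl(g(x)+\rho(x,z)\bigr)$ is exactly the paper's $h(z)=\min_{x\in S}h_x(z)$, and your argument that $\bar g|_S=g$ is the paper's argument verbatim. The genuine gap is in the step you yourself single out as the crux. The characterization you rest on --- ``a function is a height function \emph{if and only if} each edge increment lies in $[-\rho(v,u),\rho(u,v)]$'' --- is false, so the deferred ``short case analysis'' cannot confirm what you claim it confirms. In the coordinates the paper works with (they are forced by the requirement that $g(x)+\max\{y_1-x_1,y_2-x_2\}$ be a height function of an actual tiling: the two coordinate axes are the two non-vertical lattice directions, so the vertical lattice direction is $(1,1)$), a vertical edge $u\to v=u+(1,1)$ has $\rho(u,v)=1$ and $\rho(v,u)=\max\{-1,-1\}=-1$, so your interval is $[1,1]$ and forces the increment to be exactly $1$. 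But a height function has increment $0$ across a vertical edge whenever that edge is the crossed diagonal of a horizontal lozenge; the extreme case is a region tiled entirely by horizontal lozenges, whose height function is constant. The same example refutes your preliminary claim that every height function satisfies $h(z)-h(x)\le\rho(x,z)$: the true maximal height gain is $\max\{z_1-x_1,\,z_2-x_2,\,0\}$, which is nonnegative, whereas $\rho(x,z)<0$ when both coordinates of $z-x$ are negative. So the ``only if'' direction of your equivalence fails outright, and with it your stated reason that $\bar g$ is a height function.

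What your proof actually needs is only the one-sided implication: any integer function whose increments lie in your (strictly smaller) intervals --- $\{0,1\}$ along the two axis directions and exactly $1$ along $(1,1)$ --- is a height function. This is true, but it is not established by observing that increments around each triangular face ``sum to zero automatically''; that holds for any function and is not the issue. The issue is that the increment pattern on each face must be one that is realized by a lozenge covering that face, which is a genuine constraint and is precisely the content of the local characterization of height functions. (Here it does work out: the forced vertical increment $1$ splits as $1+0$ or $0+1$ along the other two edges of each triangle, and both patterns correspond to a lozenge crossing exactly one edge; no pattern corresponding to no lozenge survives your constraints.) As written, that realizability check --- the only nontrivial point --- is absent and is replaced by a false assertion. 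The paper avoids the whole problem with two citable facts: each $h_x(y)=g(x)+\max\{y_1-x_1,y_2-x_2\}$ is itself a height function of an explicit tiling, and the pointwise minimum of height functions is a height function. I would either switch to that argument or prove the sufficiency statement above honestly, keeping in mind that your intervals are properly contained in, not equal to, the admissible increment sets.
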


The lemma is a variation on~\cite[Thm.~4.1]{PST} (see also~\cite{Thu}).
It can be viewed as a Lipschitz extendability property on height
functions (cf.~\cite{CPT}). We include a quick proof for completeness.

\begin{proof}
Note that \ts $h_x(y)=g(x) + \min \{ y_1-x_1,y_2-x_2\}$ \ts is the height
function of the maximal tiling centered at $x$ and with height
$g(x)$ at $x$ (see Figure~\ref{fig:lemma21}). Define \ts
$h(y):= \min_{x\in S} \ts h_x(y)$. Since the minimum of two height functions
is still a height function, we conclude that $h$ is itself a height function.
Moreover, the inequality~\eqref{eq:lips} implies that for all pairs
\ts $x,y \in S: g(y) \leq h_x(y)$.  We conclude that $h(y)=g(y)$, which implies
the result. \end{proof}
	
\begin{figure}
\includegraphics[scale=0.57]{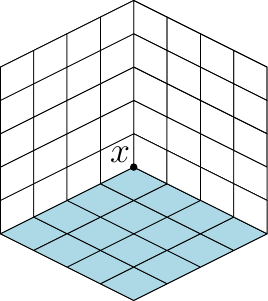}
\hskip3.cm
\includegraphics[scale=0.57]{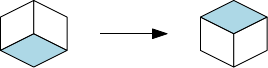}
\caption{Left: height function of the maximal tiling centered at $x$ with
  height $g(x)$. Right: the local move on lozenges.}
\label{fig:lemma21}
\end{figure}



Secondly, we need the following standard
proposition which will be useful later in this article.
	
\begin{proposition}[see~\cite{Thu}] \label{obs:lozenge2triangles}
Every two lozenge tilings of a simply connected region $R$
have equal number of lozenges of each type.
\end{proposition}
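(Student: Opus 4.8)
The plan is to prove the invariant by showing that any two tilings of a simply connected region $R$ are connected by a sequence of elementary local moves, each of which preserves the number of lozenges of every type. The local move in question is the one depicted on the right of Figure~\ref{fig:lemma21}: three lozenges meeting around a unit hexagon (one of each of the three types) can be flipped to the complementary configuration of three lozenges. Crucially, this flip replaces one lozenge of each type by one lozenge of each type, so the total count of each of the three types is literally unchanged by a single move.

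First I would reformulate everything in terms of height functions, which is justified by the bijection recorded in~$\S$\ref{ss:background-tilings}: each tiling $\tiling_h$ of $R$ corresponds to a height function $h$, and since $R$ is simply connected the boundary values of $h$ are determined up to an additive constant by the admissible stepped curve along $\partial R$. Thus any two tilings $\tiling_{h}$ and $\tiling_{h'}$ of the same region $R$ have height functions agreeing on $\partial R$ (after normalizing the constant). The key structural fact is then that the set of height functions with fixed boundary values forms a lattice under pointwise $\min$ and $\max$ — indeed the proof of Lemma~\ref{lem:extension} already uses that the pointwise minimum of height functions is a height function. I would use this to run a standard descent argument: starting from $h$, at any interior vertex where $h > h'$ one can decrease $h$ by a local move, strictly lowering the quantity $\sum_{v} \bigl(h(v)-h'(v)\bigr) \ge 0$ while keeping $h \ge h'$, and symmetrically raise where needed, so that finitely many local moves transform $h$ into $h'$.

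It then remains only to observe that a local move does not change the number of lozenges of each type. This is immediate from the description of the move: a flip around a unit hexagon removes exactly one horizontal lozenge ($\hlozenge$), one lozenge of the second type ($\vtwolozenge$), and one of the third ($\vthreelozenge$), and inserts exactly one of each again in the flipped position. Hence the three type-counts are invariant under every move, and being invariant under the generating moves, they are invariant across the whole connectivity class — proving that $\tiling_h$ and $\tiling_{h'}$, and therefore any two tilings of $R$, share the same multiset of lozenge types.

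The main obstacle is the connectivity claim: establishing rigorously that local moves suffice to pass between any two height functions with the same boundary data. The clean way to handle this is the lattice/monotone-coupling argument above, reducing the general case to comparing each $h$ with the pointwise minimum $h \wedge h'$ and then climbing to $h'$, where each step is a single move at a strict local maximum of the difference; finiteness follows because each move strictly decreases the nonnegative integer $\sum_v |h(v)-h'(v)|$. Since this is the classical Thurston connectivity theorem for lozenge tilings of simply connected regions, I would cite~\cite{Thu} for the details rather than reproduce the full descent, exactly as the statement itself is attributed there.
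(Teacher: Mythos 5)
Your proposal is correct and takes essentially the same approach as the paper: the paper likewise justifies the statement by observing that any two tilings of a simply connected region are connected by the local moves of Figure~\ref{fig:lemma21}, each of which trades one lozenge of each type for one lozenge of each type. The height-function descent you sketch is exactly the standard detail the paper delegates to the citation of~\cite{Thu}.
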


In other words, the number of lozenges of each type depends only on~$R$ and
not on the tiling.  This follows, e.g.\ since every two tilings of $R$ are
connected by local moves which do not change the number of lozenges of each
type (see Figure~\ref{fig:lemma21}).

\medskip

\subsection{Skew shapes and tableaux}\label{ss:background-skew}
Let $\lambda = (\lambda_1,\ldots,\lambda_r)$ and $\mu=(\mu_1,\ldots,\mu_s)$
denote integer partitions of length $\ell(\lambda)=r$ and $\ell(\mu)=s$.
The size of the partition is denoted by $|\lambda|$. We denote by
$\lambda'$ the \emph{conjugate partition}, and by $[\lambda]$
the corresponding \emph{Young diagram} (in English notation).
The \emph{hook length} $\hk_{\lambda}(x,y)$ of a cell $(x,y)\in\lambda$ is
defined as \ts $\hk_{\lambda}(x,y):=\lambda_x-x+\lambda'_y-y+1$.
It counts the number of cells directly to the right and directly
below $(x,y)$ in $[\lambda]$. The content of a cell $(x,y) \in \lambda$ is $c(x,y)=y-x$.

A \emph{skew shape} $\lambda/\mu$ is defined as the difference
of two shapes. Let $N = |\lambda/\mu|$.  We always assume that
the skew shape is connected. Let $N = |\lambda/\mu|$, $a=\mu_1$, $b=\ell(\mu)$ and $d_a,d_{a-1},\ldots,d_{-b}$ be the length of the diagonals in $[\lambda/\mu]$ with contents $x-y=a,a-1,\ldots,-b$, respectively. Note that the $d_{\mu_1-1},d_{\mu_2-2},\ldots,d_{\mu_b-b}$ are the diagonal lengths of the last cell $(k,\mu_k)$ of a row of $\mu$.

A \emph{standard Young tableau}
(SYT) of shape $\lambda/\mu$ is a bijective function \ts
$T: [\la/\mu] \to \{1,\dots,N\}$, increasing in rows and columns.
The number of such tableaux is denoted by $f^{\lambda/\mu}$.
This counts the number of linear extensions of the poset defined
on $[\lambda/\mu]$, with cells increasing downward and to the right.

\medskip

\subsection{Naruse's hook-length formula}\label{ss:background-naruse}
As mentioned in the introduction, the Naruse hook-length formula~\eqref{eq:Naruse}
gives a positive formula for $f^{\lambda/\mu}$. It was restated in
\cite[\S 7]{MPP3}
in terms of lozenge tilings as follows.

\begin{figure}[hbt]
\includegraphics[scale=1]{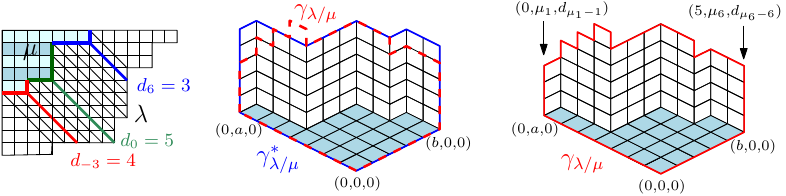}
\caption{Skew shape $\lambda/\mu$, the ASC $\gamma^*_{\lambda/\mu}$ (in blue), and the ASC $\gamma_{\lambda/\mu}$ (in red).}
\label{fig:skewshapetiling}
\end{figure}

Let $\lambda/\mu$ be a skew shape with $N$ cells and $a=\mu_1,
b=\ell(\mu)$.  Let $\gamma^*_{\lambda/\mu}$ be the ASC in
$\mathbb{R}^3$ bounded below by the polygonal chain with points $(0,a,0),(0,0,0),(b,0,0)$ and
with upper side given by the polygonal chain with points $(k,\mu_k,d)$
where $d=\max(d_{\mu_1-1},\ldots,d_{\mu_b-b})$\footnote{In \cite[\S 7.1]{MPP3} there is a
  typo since $d$ is defined there to be $\max(d_{\mu_1-1},d_{\mu_b-b})$, in our notation.}. Let $H^*_{\lambda/\mu}$ be the set of height functions $h$
that extend $\gamma^*_{\lambda/\mu}$ with the additional restriction
that on each vertical
diagonal $x-y=k$ there are no horizontal lozenges of $\tiling_h$ with position
$(x,y)$  with $y>\lambda_x$.

For our purposes, it will be more convenient to modify the upper side
of the ASC instead of imposing the above restriction on the horizontal lozenges. In order to do this, let $\gamma_{\lambda/\mu}$ be the
ASC in $\mathbb{R}^3$ bounded below by the polygonal chain with points
$(0,a,0),(0,0,0),(b,0,0)$ and
with upper side given by the polygonal chain with points
$(x_i,y_i,d_i)$ for $i=a,a-1,\ldots,b$, where $(x_i,y_i)$ is the last cell of $[\mu]$ in the diagonal with content $i$
  (see
    Figure~\ref{fig:skewshapetiling}). Let  $H_{\lambda/\mu}$ be the
    set of height functions $h$ that extend $\gamma_{\lambda/\mu}$.

For both height functions in $H^*_{\lambda/\mu}$ and
$H_{\lambda/\mu}$, the weight of a
horizontal lozenge of $\tiling_h$ with position $(x,y)$ 
is the
hook length $\hk_{\lambda}(x,y)$.
The weight of a tiling $\tiling_h$ is the product of the weights
of its horizontal lozenges and we denote it by $\hookwts_{\lambda}(\tiling_h)$,
\[
\hookwts_{\lambda}(\tiling_h) := \prod_{\lozenge \in \tiling_h} \hk_{\lambda}(x_{\lozenge},y_{\lozenge}).
\]

\begin{proposition} \label{prop: lozenge tilings MPP3 and MPT are the same}
There is a weight preserving bijection between the height functions in
$H_{\lambda/\mu}$ and $H^*_{\lambda/\mu}$.
\end{proposition}

\begin{proof}
The ASC $\gamma_{\lambda/\mu}$ and $\gamma^*_{\lambda/\mu}$ are bounded below by the same polygonal chain with points $(0,a,0),(0,0,0),(b,0,0)$. The  restriction on the horizontal lozenges of $\tiling_h$ for height functions $h$ in $H^*_{\lambda/\mu}$ is the same as the restriction imposed by the upper side of the polygonal chain bounding the ASC $\gamma_{\lambda/\mu}$. Therefore, there is a correspondence on the height functions that extend each of the ASCs. Moreover, if $h$ in $H_{\lambda/\mu}$ corresponds to $h'$ in $H^*_{\lambda/\mu}$, the horizontal lozenges in $\tiling_h$ and $\tiling_{h'}$ have the same respective positions $(x,y)$  and therefore $\hookwts_{\lambda}(\tiling_h)=\hookwts_{\lambda}(\tiling_{h'})$, as desired.
\end{proof}

\begin{remark}
    Alternatively, from the proof of \cite[Thm. 7.2]{MPP3} there is a correspondence from lozenge tilings for height functions in   $H^*_{\lambda/\mu}$ (in $H_{\lambda/\mu}$) to  \emph{excited diagrams}  in $\mathcal{E}(\lambda/\mu)$ (see e.g. \cite[Sec. 2.6]{MPP3} for a definition of  these diagram). The map is as follows, $\tiling_h$ corresponds to a \emph{reverse plane partition} of shape $\mu$: $\tiling_h \mapsto \pi$ where $\pi(i,j)$ is the height  of the horizontal lozenge at position $(i,j)$. Next, such plane partitions $\pi$ correspond to excited diagrams $D$ in $\mathcal{E}(\lambda/\mu)$ : $\pi \mapsto D$ where $r_{i,j}:=\pi_{i,j}-i$ is the row number of the final position of cell $(i,j)$ of $\mu$ after it has been moved under the excited moves from $[\mu]$ to $D$ (see Figure~\ref{fig:tiling to excited diagram}). The condition that $D\subset [\lambda]$ is equivalent to the restriction on horizontal lozenges imposed by either the height restriction on such lozenges for $h$ in $H^*_{\lambda/\mu}$ or by the polygonal chain with points $(x_i,y_i,d_i)$ where $d_i$ are the lengths of diagonals of $\lambda/\mu$ in the definition of $H_{\lambda/\mu}$.
\end{remark}

\begin{figure}
    \centering
    \includegraphics{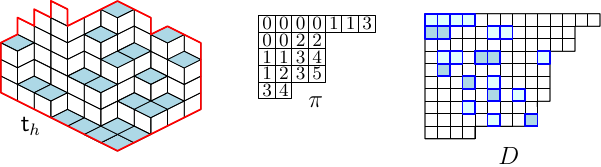}
    \caption{A lozenge tiling $\tiling_h$  for a height function $h$ in $H_{\lambda/\mu}$ and its corresponding reverse plane partition $\pi$ and excited diagram $D$. Excited diagrams appear in  Naruse's formula.}
    \label{fig:tiling to excited diagram}
\end{figure}

\begin{theorem}[{Naruse \cite{Nar}; lozenge tiling versions
    \cite[$\S$7]{MPP3}}] \label{thm:NaruseTiling0}
	\begin{align} \label{eq:NaruseTiling0}
	f^{\lambda/\mu}  &=   \frac{N!}{\prod_{(x,y) \in \lambda} \hk_{\lambda}(x,y)} \sum_{h \in H^*_{\lambda/\mu}} \hookwts_{\lambda}(\tiling_h)\\
	&= \frac{N!}{\prod_{(x,y) \in \lambda} \hk_{\lambda}(x,y)}
          \sum_{h \in H_{\lambda/\mu}} \hookwts_{\lambda}(\tiling_h). \label{eq:NaruseTiling}
	\end{align}
\end{theorem}

\begin{proof}
The first lozenge tiling reformulation is implicit in \cite[\S
7.1]{MPP3}. The second formulation follows from the first one by using Proposition~\ref{prop: lozenge tilings MPP3 and MPT are the same}.
\end{proof}

\begin{example}{\rm
The skew shape $332/21$ has five height functions that extend $\gamma_{2,1,1}$:
\begin{center}
\includegraphics[scale=0.5]{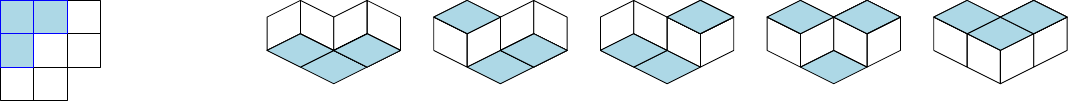}
\end{center}
Formula \eqref{eq:NaruseTiling} yields in this case
\[
f^{332/21} \, = \, \frac{5!}{5\cdot 4^2\cdot 3 \cdot 2^2} \. \bigl(5\cdot 4 \cdot 4
\ts + \ts 5\cdot 4\cdot 1 \ts + \ts  5\cdot 4\cdot 1 \ts + \ts  5 \cdot 1 \cdot 1 \ts + \ts  3\cdot 1 \cdot 1\bigr) \, = \. 16.
\]
}\end{example}

\medskip

\subsection{Stable shapes} \label{def:stronglystableshape}
Let $\psi:[0,a] \to [0,b]$ be a non-increasing left continuous function. Assume a sequence of partitions $\{\lambda^{(N)}\}$ satisfies the following property
	\[
	(\sqrt{N}-L)\ts \psi \. < \. [\lambda^{(N)}] \. < \. (\sqrt{N} + L)\ts \psi, \ \ \text{ for some } \ \. L >0,
	\]
where $[\lambda]$ denotes the function giving the boundary of
the Young diagram of $\lambda$. In this setting, we say that
$\{\lambda^{(N)}\}$ has a strongly stable shape $\psi$ and
denote it by $\lambda^{(N)} \to \psi$. Note that $\ell(\lambda^{(N)})$,
$\ell(\lambda^{(N)'}) = O(\sqrt{N})$.
Such shapes are also called {\em balanced} (see e.g.~\cite{FeS}).

Let $\psi,\phi: [0,a] \to [0,b]$ be non-increasing left continuous functions, and suppose
that $area(\psi/\phi)=1$. Let $\{ v_N = \lambda^{(N)}/\mu^{(N)}\}$ be a sequence of
skew shapes with the strongly stable shape $\psi/\phi$,
i.e. $\lambda^{(N)} \to \psi$,
$\mu^{(N)} \to \phi$ and in addition they satisfy the condition
\begin{equation} \label{eq:condition_area}
        |\mu^{(N)}| \. = \. \area(\phi) \cdot N \. +  \. o(N/\log N)\ts.
\end{equation}

Denote by $\cC(\psi), \cC(\psi/\phi) \ssu \rr_+^2$ the region below
the curve $\psi$ and  between
the curves $\psi$ and $\phi$, respectively. One can view $\cC(\psi/\phi)$ as the stable shape of the skew diagrams.

\smallskip

Finally, define the {\em hook function}
$\hbar:\mathcal{C}(\psi) \to \mathbb{R}_+$ to be the limit of the scaled function
of the hooks:
\begin{equation} \label{eq:defhbar}
\hbar(x,y) \. := \lim_{N \to \infty} \frac{1}{\sqrt{N}} \.\ts \hk_{\lambda^{(N)}}\bigl(\lfloor
x\sqrt{N}\rfloor, \lfloor y\sqrt{N}\rfloor\bigr)=\psi(x)+\psi^{-1}(y)-x-y\.,
\end{equation}
where $\psi^{-1}:[0,b] \to [0,a]$ is the inverse of the function $\psi$, which is the continuous analogue of the conjugate partition, i.e. $\lambda^{'(N)} \to \psi^{-1}$.

\section{Variational principle for weighted lozenge tilings}\label{s_main_result}
Lozenge tilings is a dimer model and the existence of a variational principle
which governs the limiting behavior of dimers under the uniform measure is a
well known result (see \cite{CKP01}). Our goal in this section will be to extend it to the case where we add weights to each tilings
that depend on the position and the type of the lozenge tiles.
	
\medskip	
	
\subsection{Weighted tilings and smooth weights}
Let $D\ssu \R^2$ be a connected domain in the plane, and let \ts
$\{w^{(i)}:D \to \R \}_{i \leq 3}$ \ts be three real valued
functions corresponding to the weight of each type of
lozenge. For a region \ts $R \subset D$, define the \textit{weight}
of a height function $h$ on $R$ associated to the weight functions
\ts $w=(w^{(1)},w^{(2)},w^{(3)})$ \ts as
\begin{equation} \label{eq:defweight-tiling} 
\weight(h) \, := \, \prod_{\lozenge \in \tiling_h} \. \exp(w^{(i_\lozenge)}(x_\lozenge,y_\lozenge)),
\end{equation}
where \ts $(x_\lozenge,y_\lozenge)$ \ts are the coordinates of the
center of the tile $\lozenge$ and $i_\lozenge \in \{1,2,3\}$
is the type of the lozenge tile:

\begin{center}
\includegraphics[scale=1.1]{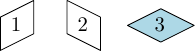}
\end{center}

\nin
Given a weight function $w$, the partition function associated to an ASC $\gamma$
is defined as:
\[
\Zptn(\gamma,w) \. := \, \sum_{h \in H_{\gamma}} \. \weight(h),
\]
where $H_{\gamma}$ is the set of height functions which extend~$\gamma$.
Let $\numtilings_{\gamma}$ be the size of $H_{\gamma}$ and let
$\numlozenge^{(i)}(\gamma)$ be the (common) number of type~$i$
  lozenges in each height function that extends $\gamma$.

\begin{definition} \label{def:convweight}
	Let $D$ be a domain in $\mathbb{R}^2$. A sequence of weight functions $\{w_n\}_{n \in \N}$  converges
        to a piecewise smooth function $\rho: D \to \R^3$ if it has
        the following property:
\begin{equation} \label{prop:defnweights}
        \tag{$\ast$}
        \lim_{n\to \infty} \sup_{(x_1,x_2) \in D} \| w_n(nx_1,nx_2) - \rho(x_1,x_2) \|_{\infty} \. = \. 0.
\end{equation}
\end{definition}

\bigskip

\subsection{The variational principle}
Our goal in this section is to establish a variational
principle for weighted tilings. We recall the unweighted version of
the variational principle from \cite[Thm. 9]{Ken09}. Let
$\Lip_{[0,1]}$ be the set of $1$-Lipschitz functions
$f:\R^2 \to \R$ that satisfy
\begin{equation} \label{eq:defLipschitz}
0\, \leq \, \partial_{x_1}f,\,\,\,\,\,\.\partial_{x_2}f,\,\,\,\,\,\. 1-\partial_{x_1}f-\partial_{x_2}f \, \leq\, 1
\end{equation}
everywhere except on a set of Lebesgue measure~$0$.  Let
\begin{equation}\label{eq:Lob}
\sigma(s,t) \, := \, \frac{1}{\pi}\ts \biggl(\Lob(\pi s) \. + \. \Lob(\pi t) \. + \. \Lob\bigl(\pi (1-s-t)\bigr)\biggr),
\end{equation}
where $\Lob(\cdot)$ is the \emph{Lobachevsky function}, see e.g.~\cite{thurston1979geometry}.

\begin{theorem}[\cite{Ken09}]
Let $\{ \gamma_n\}_{n\in \N}$ be a sequence of ASC. Suppose that
\ts $\frac{1}{n}\gamma_n$ \ts converges to a closed curve~$\gamma$
in $\mathbb{R}^3$ in the $\ell_{\infty}$ norm as $n \to \infty$. Then:
\[
\lim_{n\to \infty} \. \frac{1}{n^2} \. \log \numtilings_{\gamma_n} \, \to \, \Ups(g_{\max})\ts,
\]
where $g_{\max}: U \to R$ is the only extension of~$\gamma$
in \ts $\Lip_{[0,1]}$~that maximizes the following integral:
\[
\Ups(g) \, := \, \iint_{U} \sigma\bigl(\nabla g(x_1,x_2)\bigr) \ts dx_1\ts dx_2\ts,
\]
and $U$ is the region enclosed by the projection of~$\gamma$.
Moreover, for all $\e > 0$ the height function of a random tiling
chosen from the weighted measure associated to $w_n$ on height
functions with boundary $\gamma_n$, stays within $\e$ of $g_{\max}$
with probability $\to 1$ as $n\to\infty$.
\end{theorem}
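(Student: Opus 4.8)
The plan is to prove both assertions — the asymptotics of $\frac{1}{n^2}\log \numtilings_{\gamma_n}$ and the concentration of the random height function — through a large-deviations analysis whose rate functional is exactly $\Phi$. The conceptual heart is the identification of the integrand $\sigma(s,t)$ as a \emph{local entropy density}: for a region of mesoscopic size $m$ constrained to have average slope $(s,t)=(\partial_{x_1}h,\partial_{x_2}h)$, the number of tilings should grow like $\exp(m^2\sigma(s,t)+o(m^2))$. I would establish this local limit first, since everything else is built on top of it. Concretely, one counts tilings of an $m\times m$ torus (or of a box with slope-consistent boundary) via the Kasteleyn determinant and shows that the normalized logarithm of that determinant converges to a free energy whose Legendre dual is the surface tension $\sigma(s,t)$ of \eqref{eq:Lob}; the Lobachevsky function enters precisely through this determinantal computation. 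This is the input from the ergodic Gibbs measure theory for dimers.

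Granting the local limit, the matching upper and lower bounds for $\frac{1}{n^2}\log\numtilings_{\gamma_n}$ follow by coarse-graining. For the upper bound, tile the projection $U$ by boxes of side $\delta n$; a tiling is determined by its restrictions to the boxes, and each restriction has a well-defined box-slope. Since any height function is $1$-Lipschitz, its box-averages form a piecewise-affine profile $g\in\Lip_{[0,1]}$ with slopes in the admissible simplex. Grouping tilings by the resulting $g$ and bounding each box's contribution by the local limit gives $\numtilings_{\gamma_n}\le \sum_{g}\exp(n^2\Phi(g)+o(n^2))$; as the number of distinct coarse profiles is bounded independently of $n$ (at most $C^{1/\delta^2}$), summing costs nothing on the exponential scale, and maximizing over $g$ yields $\limsup \frac{1}{n^2}\log\numtilings_{\gamma_n}\le \sup_{g}\Phi(g)$. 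For the lower bound I would fix the maximizer $g_{\max}$, discretize it into box-slopes, place inside each box a tiling with the prescribed slope and a boundary height agreeing with the discretized $g_{\max}$, and glue the patches. Each box contributes $\exp((\delta n)^2\sigma+o)$ tilings, the mismatches live on the $O(n/\delta)$ boundary cells and cost only $\exp(o(n^2))$, so $\numtilings_{\gamma_n}\ge \exp(n^2\Phi(g_{\max})-o(n^2))$.

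Existence and uniqueness of $g_{\max}$ come from the direct method together with the strict concavity of $\sigma$ on the open simplex: $\Lip_{[0,1]}$ with boundary data $\gamma$ is convex and sequentially compact in the uniform topology, $\Phi$ is upper semicontinuous, so a maximizer exists, and strict concavity of $\Phi$ forces it to be unique. The concentration statement is then soft. By strict concavity, $\sup\{\Phi(g):\|g-g_{\max}\|_\infty\ge \e\}=\Phi(g_{\max})-c(\e)$ for some $c(\e)>0$; comparing the partition function restricted to tilings that are $\e$-far from $g_{\max}$ against the full partition function shows the corresponding probability is at most $\exp(-c(\e)n^2+o(n^2))\to 0$, where a finite $\e/2$-net over profiles makes the union bound rigorous. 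For the uniform measure this is exactly the claimed law of large numbers; the weighted measure named in the statement is handled identically once $\Phi$ is replaced by its weighted analogue (Theorem~\ref{thm:main}).

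I expect the genuine obstacle to be the local limit of the first paragraph — proving that the box/torus partition function has exponential rate equal to the explicit $\sigma(s,t)$ of \eqref{eq:Lob}. This is the determinantal asymptotics, equivalently the uniqueness and entropy of the maximal-entropy Gibbs measure of a given slope, and it is the step where the integrable structure of lozenge tilings is used in an essential way. A secondary difficulty is the patching lemma in the lower bound: one must know that the box count with a \emph{fixed} slope-consistent boundary (not merely free boundary) still attains the rate $\sigma$, so that adjacent patches can be forced to agree without losing entropy.
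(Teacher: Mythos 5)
Your overall architecture is the standard one, and it is essentially the scheme behind this theorem: the paper itself does not prove this statement (it cites \cite{Ken09,CKP01} and notes the domino argument of \cite{CKP01} transfers to lozenges), but it replicates exactly this scheme in Section~\ref{sec:proofwtvarprinciple} for the weighted analogue. The two inputs you single out as the real difficulties are precisely the ones the paper imports or proves: the torus free energy computation identifying $\sigma(s,t)$ is \cite[Thm.~8]{Ken09}, and the fact that a \emph{fixed} plane-like boundary still attains the rate $\sigma(s,t)$ is Lemma~\ref{entropy_fundamental_domain}, proved there by combining the martingale concentration estimate of \cite[Prop.~22]{CEP96} on the torus with the Lipschitz extension Lemma~\ref{lem:extension} to graft periodic configurations onto the prescribed boundary. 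So your diagnosis of where the work lies is accurate, and your lower bound, uniqueness, and concentration arguments match the paper's.

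There is, however, a genuine gap in your upper bound as written. You group tilings by the piecewise-affine profile $g$ of box-averages and then bound each box's contribution ``by the local limit.'' But the local limit you established applies only to boxes whose \emph{boundary height function} stays within $\e m$ of an affine plane, and a $1$-Lipschitz height function restricted to a box of side $m$ can deviate from the affine function with its box-average slope by an amount of order $m$ (a ``tent'' over the box, say): its average slope is well defined, yet its boundary is nowhere near plane-like, so the plane-like lemma yields only $\sigma(s,t)+O(1)$, which is useless. To make the per-box bound legitimate you would need a local upper bound valid for \emph{arbitrary} fixed boundaries of given average tilt --- this is true (concavity of $\sigma$ plus the fact that boundary values determine the mean gradient), but it is an additional lemma you neither state nor prove. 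The repair used in \cite{CKP01} and in the paper's proof of Theorem~\ref{thm:main} is to reverse the order of quantifiers: first use compactness of $\Lip_{[0,1]}$ with boundary data $\gamma$ to cover it by finitely many $\delta$-balls around profiles $f_i$; then, for tilings within $\delta n$ of a fixed $n f_i$, invoke the piecewise-linear approximation (Lemma~\ref{pf:linear}, i.e.\ \cite[Lemma~2.2]{CKP01}) so that on all but an $\e$-fraction of mesh triangles the height function is genuinely within $O\bigl((\e\ell+\delta)n\bigr)$ of a plane, and only then apply the plane-like entropy bound triangle by triangle. Since your concentration step already uses exactly such a finite net, the fix is within your toolkit, but as stated the coarse-graining step of your upper bound does not go through.
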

	
The proof of this result is sketched in~\cite{Ken09} and is the analogue
of an earlier result for dominoes~\cite{CKP01}. The argument in the
latter paper extends to our setting of lozenges.

\smallskip

We are now ready to state
the variational principle for the weighted case. The proof is postponed to
Section~\ref{sec:proofwtvarprinciple}.
	
\begin{theorem}[Weighted variational principle] \label{thm:main}
Let $\{\gamma_n \}_{n\in \N}$ be a sequence of ASC, and let
$\{w_n\}_{n \in \N}$ be a  sequence of weight functions converging to
a function $\rho$.  Suppose that $\frac{1}{n}\gamma_n$ converges to a closed curved $\gamma$ in $\mathbb{R}^3$ in the
$\ell_{\infty}$ norm as $n\to \infty$. Then we have:
\[
		\lim_{n\to \infty }\. \frac{1}{n^2}\log\Zptn(H_{\gamma_n}, w_n) \. = \. \Psi(f_{\max})\ts.
\]
Here $f_{\max}: U \to \R$ is the only extension of $\gamma$  in $\Lip_{[0,1]}$ which
maximizes the following integral:
\begin{equation} \label{eq:Ww}
\Psi(f)\. := \, \iint_{U} \. \Bigl(\sigma(\nabla f)\ts +\ts L(x_1,x_2,\nabla f)\Bigr) \ts dx_1 \ts dx_2,
\end{equation}
where $U$ is the region enclosed by the projection of $\gamma$, and
\begin{equation}  \label{def:L}
L(x_1,x_2,\nabla f) \. :=\. \rho(x_1,x_2) \cdot (\partial_{x_1} f,\partial_{x_2} f,1-\partial_{x_1} f-\partial_{x_2} f).
\end{equation}
Moreover, for all $\e > 0$, the height function of a random tiling chosen from
the weighted measure associated to $w_n$ on height functions with boundary $\gamma_n$,
stays within $\e$ of $f_{\max}$ with probability tending to $1$ as
$n\to \infty$.
\end{theorem}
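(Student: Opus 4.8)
The plan is to reduce the weighted count to the unweighted one by organizing the sum defining $\Zptn(H_{\gamma_n},w_n)$ according to the macroscopic profile of each tiling, and then to apply a Laplace-type (large deviation) argument. The guiding principle is that, to leading order in $n^2$, the logarithm of the weight $\weight(h)$ of a tiling depends on $h$ \emph{only} through its limiting height profile $f$, and equals $n^2\int_U L(x_1,x_2,\nabla f)\,dx_1\,dx_2$. Granting this, each profile $f\in\Lip_{[0,1]}$ extending $\gamma$ contributes $(\text{number of tilings near }f)\times(\text{weight near }f)\approx \exp\!\big(n^2(\Phi(f)+\int_U L)\big)=\exp(n^2\Psi(f))$, and the sum is governed by the maximizer $f_{\max}$ of $\Psi$.

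I would first record the localized form of the unweighted principle underlying Kenyon's theorem: writing $N_{\gamma_n}(f,\e)$ for the number of height functions with boundary $\gamma_n$ staying within $\e$ of $nf$ in the sup norm, one has $\lim_{\e\to0}\lim_{n\to\infty}\frac1{n^2}\log N_{\gamma_n}(f,\e)=\Phi(f)$ for every admissible profile $f$. This is the entropy (rate-function) statement behind \cite{Ken09,CKP01} and is proved by the same surface-tension arguments. The heart of the matter is the \emph{weight-approximation lemma}: for every $h$ within $\e$ of $f$,
\[
\Bigl|\tfrac1{n^2}\log\weight(h)\;-\;\int_U L(x_1,x_2,\nabla f)\,dx_1\,dx_2\Bigr|\;\le\;\omega(\e)+o(1),\qquad \omega(\e)\to0.
\]
To prove it, note that $\log\weight(h)=\sum_i\sum_{\lozenge\text{ of type }i}w_n^{(i)}(x_\lozenge,y_\lozenge)$, and that the number of type-$i$ lozenges inside a region equals the integral over that region of the corresponding component of $\nabla\tilde h$, where $\tilde h=h/n$ is the rescaled (piecewise-linear) height function; this is the flux computation underlying Proposition~\ref{obs:lozenge2triangles}. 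Since $w_n(n\,\cdot)\to\rho$ uniformly by \eqref{prop:defnweights}, we may replace $w_n^{(i)}$ by the smooth $\rho^{(i)}$ and obtain $\frac1{n^2}\log\weight(h)=\int_U\rho\cdot(\partial_{x_1}\tilde h,\partial_{x_2}\tilde h,1-\partial_{x_1}\tilde h-\partial_{x_2}\tilde h)\,dx+o(1)$.

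The replacement of $\nabla\tilde h$ by $\nabla f$ is where the structure of $L$ is essential: because $L$ is \emph{affine} in the gradient, the difference of the two integrals reduces to $\int_U\tilde\rho\cdot\nabla(\tilde h-f)\,dx$ for a smooth planar vector field $\tilde\rho$ assembled from $\rho$, and integrating by parts — using that $\tilde h-f$ vanishes on $\partial U$ since both extend $\gamma$, and that $\rho$ (hence $\tilde\rho$) is piecewise smooth — bounds it by $O(\e\,\|\nabla\tilde\rho\|_\infty\,|U|)=\omega(\e)$. Thus the microscopic fluctuations of the slopes of $h$ around $\nabla f$ are invisible at this scale. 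With the two lemmas in hand I would assemble the bounds in the usual way: for the upper bound, cover the (sup-norm compact) set of profiles by a finite $\e$-net and bound $\Zptn$ by the number of net points times $\max_f N_{\gamma_n}(f,\e)\,e^{n^2(\int_U L+\omega(\e))}$, giving $\limsup\frac1{n^2}\log\Zptn\le\max_f\Psi(f)$; for the lower bound, restrict the sum to tilings within $\e$ of $f_{\max}$, giving $\liminf\frac1{n^2}\log\Zptn\ge\Psi(f_{\max})$. Uniqueness of $f_{\max}$ follows since $\Phi$ is strictly concave while $f\mapsto\int_U L$ is affine, so $\Psi$ is strictly concave on the convex set of extensions; and the concentration (``moreover'') statement follows because profiles with $\Psi(f)\le\Psi(f_{\max})-\eta$ contribute only an $e^{-\Omega(n^2)}$ fraction of $\Zptn$. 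The main obstacle is precisely the weight-approximation lemma: sup-norm closeness of $h$ to $f$ does \emph{not} control the microscopic slopes of $h$ pointwise, so one cannot naively substitute $\nabla f$ for the local lozenge densities; the resolution is the integration-by-parts cancellation above, which works only because $L$ is affine in the gradient and $\rho$ is smooth, together with the rigidity of per-region lozenge counts from Proposition~\ref{obs:lozenge2triangles}.
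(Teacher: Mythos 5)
Your proposal is correct in outline, but it takes a genuinely different route from the paper's proof. Both arguments share the same outer Laplace-method skeleton: a lower bound by restricting the partition function to tilings near $f_{\max}$, an upper bound via compactness of the space of profiles and a finite net, uniqueness of $f_{\max}$ from strict concavity of $\sigma$ plus affineness of $L$, and concentration as a byproduct. The difference lies in the key technical ingredient. The paper localizes \emph{both} entropy and weight simultaneously: Lemma~\ref{entropy_fundamental_domain} (proved from Kenyon's torus count, a martingale concentration estimate, and gluing via Lemma~\ref{lem:extension}) shows that plane-like patches of slope $(s,t)$ have $\exp(\sigma(s,t)\cdot\mathrm{area})$ tilings \emph{and} that every such tiling has lozenge-type counts $\approx (s,t,1-s-t)$ per unit area; Lemma~\ref{lemma:weighted} then upgrades this to ``all tilings of a patch carry essentially the same weight $\exp(L\cdot\mathrm{area})$'', and the global statement is assembled by piecewise-linear mesh approximation following Lemmas~\ref{pf:linear} and~\ref{pf:linearlem2fromCKP} from \cite{CKP01}. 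You instead (a) import the \emph{profile-localized} unweighted principle as a black box, and (b) prove a global weight-continuity lemma: the log-weight of \emph{any} tiling whose rescaled height function is sup-norm close to $f$ equals $n^2\int_U L(\cdot,\nabla f)\,dx_1\,dx_2+o(n^2)$, via the flux/Stokes representation of lozenge-type counts (the quantitative form of Proposition~\ref{obs:lozenge2triangles}) followed by integration by parts; this works precisely because $L$ is affine in the gradient and $\rho$ is piecewise smooth, with the boundary and divergence terms each of size $O(\e)$. This decouples energy from entropy and reduces the weighted principle to the unweighted one plus a deterministic estimate --- a more modular argument, and your integration-by-parts observation is the structural reason sup-norm closeness controls the energy term (the paper's Remark after Lemma~\ref{pf:linearlem2fromCKP} exploits the same linearity, but only inside the CKP approximation step). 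What the paper's route buys is self-containedness (everything is built from the torus count) and reusable patch-level weighted estimates. Two caveats on your side, neither a genuine gap: the localized rate-function statement $\lim_{\e\to 0}\lim_n \frac{1}{n^2}\log N_{\gamma_n}(f,\e)=\Phi(f)$ is strictly stronger than the unweighted theorem as quoted from \cite{Ken09} (which gives only the maximizer and concentration), so you must import the lozenge analogue of Theorem~4.3 of \cite{CKP01} --- the same adaptation the paper itself invokes; and the lattice-to-Euclidean normalization constants (the $4/\sqrt{3}$ factors appearing in Corollary~\ref{cor:mainlemmaTriangles}) need to be tracked through your flux computation.
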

	
\bigskip

\section{From lozenge tilings to  standard Young tableaux} \label{s:count}

In this section we apply the weighted variational principle to prove the 
main result on asymptotics of the number of skew SYT of skew shapes with 
strongly stable shapes.

Recall that $\{ \nu_N = \lambda^{(N)}/\mu^{(N)}\}$ is a sequence of
skew shapes with the strongly stable shape $\psi/\phi$ as defined in
Section~\ref{def:stronglystableshape}. We denote by $U$ the region
enclosed by $\phi$ in the first quandrant of $\mathbb{R}^2$.

\subsection{The weight function of hook lengths} \label{sec:defptnZepZ}
In order to apply the weighted variational principle we need weight
functions that converge in the sense of Definition~\ref{def:convweight}.
In order to obtain a partition function that matches Naruse's
formula~\eqref{eq:NaruseTiling}, the natural choice of weight
function on $U$  is the following
$$
w_N(x,y) \, := \, \bigl(0,0,\log(\hk_{\lambda^{(N)}}(x,y)/\sqrt{N})\bigr).
$$
Denote by $\weight_N(h)$ the corresponding weight on height functions.
Then
\[
\weight(h) \. = \. (\sqrt{N})^{-|\mu^{(N)}|} \cdot \hookwts_{\lambda^{(N)}}(\tiling_h).
\]
However for this choice of weight function, $\log h_{\lambda^{(N)}}(x,y)$
can be very small for points $(x,y)$ near the border of the shape $\lambda^{(N)}$;
see Figure~\ref{f:pfskewSYT}:Left. In this regime, Property~\eqref{prop:defnweights}
might not hold. To fix this, we change the weight function to cap these small
values as follows. For $\e >0$ and $(x,y)$ in $\mathcal{C}(\psi/\phi)$, let
$$
w^{\e}_N(x,y) \. := \. \Bigl( 0, \. 0 , \, \max \bigl\{\log\bigl(\hk_{\lambda^{(N)}}(x,y)/\sqrt{N}\bigr), \log \e\bigr\} \Bigr).
$$
Denote by $\weight_N^{\e}(h)$ the corresponding weights on a height
function~$h$.  Similarly, denote by $Z_N$ and $Z_N^{\e}$ the corresponding partition
functions associated to weights $w_N$ and $w^{\e}_N$ respectively.

\begin{figure}
\includegraphics[scale=0.6]{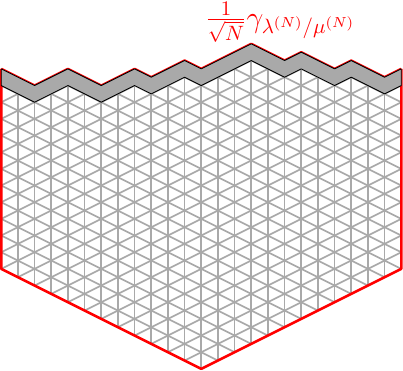} \quad \includegraphics[scale=0.6]{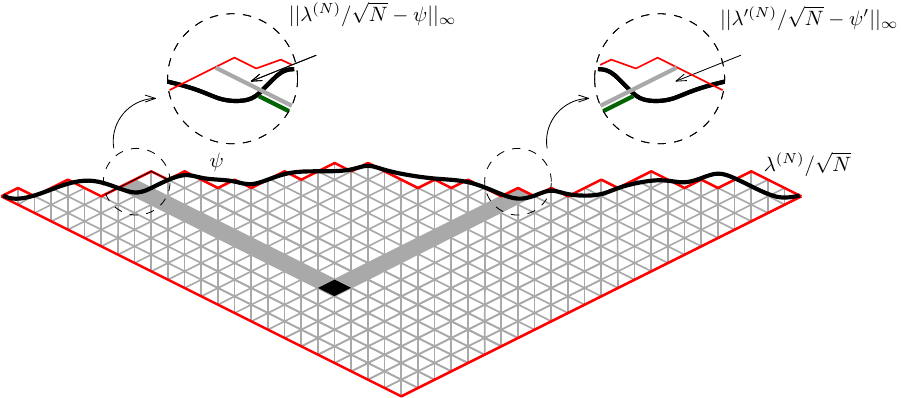}
\caption{Left: For points $(x,y)$ near the top border (schematically indicated in grey) of the region the values of $\log h_{\lambda}(x,y)$ are small and can affect convergence of the weight function. Right: The hook measured in $h_{\lambda^{(N)}}(x,y)$.}
\label{f:pfskewSYT}
\end{figure}

\medskip

\subsection{From lozenge tilings to counting tableaux}
We first show in Lemma~\ref{lemm:pfNaruse1} that the weighted variational principle, Theorem~\ref{thm:main},
applies to $Z_N^{\e}$. We then apply the variational principle in this
case to obtain
$$\lim_{N\to \infty} \. \frac{1}{N} \. \log Z_N^{\e} \. = \. c(\e),
$$
for some constant $c(\e)$ depending on $\e$ and the shapes $\psi$ and~$\phi$. We then show that
the constant $c(\e)$ converges to a constant $c$ as $\e \to 0$ (Lemma~\ref{lemm:pfNaruse3}).
In Corollary \ref{cor:logarithm} we show that
$\log Z_N^{\e}$ converges to $c$ as $\e \to 0$.
 Finally, we conclude
that
$$\lim_{N\to \infty} \. \frac{1}{N} \. \log Z_N \. = \. c,
$$
by showing that $\log Z_N^{\e}$ must also converge to $\lim_{N\to \infty} \. \frac{1}{N} \. \log Z_N$ as $\e \to 0$ (Theorem~\ref{cor:pfNaruse}).
In Section~\ref{sec:endpfskewthm}, we use this last result to prove
Theorem~\ref{thm:constantskew}. An explicit formulation of the constant $c$ is given in Corollary \ref{cor:explicit-constantskew}

We need some extra definitions. For the non-increasing left continuous function $\phi:[0,a] \to [0,b]$ define $\tilde{\phi}:[-b,a]\to \mathbb{R}$ by:
\[
\tilde{\phi}(u)\,=\, x+\phi(x) \quad \text{ for } x: x-\phi(x)=u.
\]
and define $\tilde{\psi}$ in an similar way. In other words, $\tilde{\phi}$ and $\tilde{\psi}$ are continuous analogues of the Russian representations of partitions. Next, we define the continuous limit of the ASCs associated to the sets of polygonal regions $H_{\lambda^{(N)}/\mu^{(N)}}$.

\begin{definition}\label{def:gamma}
Let  $\gamma$ in $\R^3$ be the continuous closed curve that is formed by: the line segment from $\{a,0,\tilde{\psi}(a)-\tilde{\phi}(a)\}$ to $\{a,0,0\}$, the line segment from $\{a,0,0\}$ to the origin, the line segment from the origin to $\{0,b,0\}$, the line segment from $\{0,b,0\}$ to $\{0,b,\tilde{\psi}(-b)-\tilde{\phi}(-b)\}$ and the curve $\{\phi(x),x,\tilde{\psi}(x-\phi(x))-\tilde{\phi}(x-\phi(x))\}$ for $x$ in $[0,a]$.
\end{definition}

Let $\Lip_{[0,1]}(\gamma)$ be the subset of $\Lip_{[0,1]}$ of functions which extend $\gamma$.

\begin{lemma} \label{lemm:pfNaruse1}
We have:
\[
\lim_{N\to \infty} \frac{1}{N} \log Z_N^{\e} \ = \, \sup_{f \in \Lip_{[0,1]}(\gamma)} \Psi_{\e}(f),
\]
where $\Psi_{\e}(\cdot)$ is the integral defined in~\eqref{eq:Ww} for the limiting weight function
$$
\rho_{\e}(x,y)\, :=  \Bigl( 0, \. 0, \, \max\bigl\{\log \hbar(x,y), \log \e \bigr\} \Bigr).
$$
\end{lemma}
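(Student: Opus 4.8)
The plan is to verify that the two convergence hypotheses of the weighted variational principle (Theorem~\ref{thm:main}) hold for the boundary data coming from $\nu^{(N)}=\lambda^{(N)}/\mu^{(N)}$ and for the capped weights $\{w^{\e}_N\}$, and then simply read off the conclusion. The natural linear scale is $n=\sqrt{N}$: since $\area(\psi/\phi)=1$ and $|\nu^{(N)}|=N+o(N/\log N)$, the cells of $\nu^{(N)}$ fill a region that, rescaled by $1/\sqrt{N}$, converges to $\cC(\psi/\phi)$, whose area is $1$. Under this identification the normalization $\frac{1}{n^2}=\frac{1}{N}$ in Theorem~\ref{thm:main} matches the $\frac{1}{N}$ in the statement, so it suffices to confirm the ASC hypothesis and the weight-convergence hypothesis of that theorem. (Here and below I read the left-hand side as $\frac{1}{N}\log Z_N^{\e}$.)

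First I would fix the boundary data. Let $\gamma_N:=\gamma_{\mu^{(N)},d_N}$ be the ASC from \S\ref{ss:background-naruse} with $d_N=\ell(\lambda^{(N)})-\ell(\mu^{(N)})=O(\sqrt{N})$, so that $H_{\gamma_N}=H_{\lambda^{(N)}/\mu^{(N)}}$ and $Z_N^{\e}=\Zptn(H_{\gamma_N},w_N^{\e})$ by construction. Because $\lambda^{(N)}\to\psi$ and $\mu^{(N)}\to\phi$ are strongly stable, the upper boundary $\mu^{(N)}$ and the bounding hexagon both have size $\Theta(\sqrt{N})$ and converge after rescaling; hence $\frac{1}{\sqrt{N}}\gamma_N$ converges in the $\ell_\infty$ norm to a closed curve $\gamma$ whose projection is the region $U$ associated to $\cC(\psi/\phi)$. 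This supplies the ASC hypothesis.

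The crux is the weight convergence, i.e.\ verifying Property~\eqref{prop:defnweights} for $w_N^{\e}\to\rho_\e$; this is precisely where the cap at $\log\e$ earns its keep. Only the third coordinate is nontrivial, and since $\log$ is increasing we may rewrite it as
\[
\max\bigl\{\log(\hk_{\lambda^{(N)}}(x,y)/\sqrt{N}),\, \log\e\bigr\} \, = \, \log\max\bigl\{\hk_{\lambda^{(N)}}(x,y)/\sqrt{N},\, \e\bigr\},
\]
and likewise the third coordinate of $\rho_\e$ equals $\log\max\{\hbar(x,y),\e\}$. The map $s\mapsto\log\max\{s,\e\}$ is $(1/\e)$-Lipschitz on $[0,\infty)$, so
\[
\sup_{(x,y)\in\cC}\bigl\| w_N^{\e}(\sqrt{N}x,\sqrt{N}y)-\rho_\e(x,y)\bigr\|_\infty \ \le \ \frac{1}{\e}\,\sup_{(x,y)\in\cC}\Bigl|\tfrac{1}{\sqrt{N}}\hk_{\lambda^{(N)}}(\lfloor x\sqrt{N}\rfloor,\lfloor y\sqrt{N}\rfloor)-\hbar(x,y)\Bigr|.
\]
Everything thus reduces to the \emph{uniform} convergence of the rescaled hook function to $\hbar$, which I would extract from the strongly stable inequalities $(\sqrt{N}-L)\psi<[\lambda^{(N)}]<(\sqrt{N}+L)\psi$: these pin the boundary of $\lambda^{(N)}$ to within $O(1)$ of $\sqrt{N}\psi$, and $\frac{1}{\sqrt{N}}\hk_{\lambda^{(N)}}$ is, up to lower-order terms, the sum of the horizontal and vertical distances from $(x,y)$ to the boundary curve $\psi$, so it converges to the continuous limit $\hbar$ of \eqref{eq:defhbar} uniformly with error $O(L/\sqrt{N})=o(1)$ (the floors contributing only $O(1/\sqrt{N})$).

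With both hypotheses in place, Theorem~\ref{thm:main} gives $\frac{1}{N}\log Z_N^{\e}\to\Psi_\e(f_{\max})$, where $f_{\max}$ is the maximizer of $\Psi_\e$ over admissible extensions in $\Lip_{[0,1]}$; since $f_{\max}$ attains the maximum, $\Psi_\e(f_{\max})=\sup_{f\in\Lip_{[0,1]}}\Psi_\e(f)$, which is the claim. The main obstacle is the weight-convergence step: without the cap, $\log(\hk/\sqrt{N})\to-\infty$ along the part of $\partial\cC$ lying on $\psi$ (where $\hbar=0$), which would destroy the uniform bound required by \eqref{prop:defnweights}; the cap simultaneously keeps the weights bounded below and renders the $\log$ transform Lipschitz with constant $1/\e$, so the uniform hook convergence passes through. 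One should also note that this $\e$-dependent Lipschitz constant is harmless here, as $\e$ is held fixed throughout this lemma and only sent to $0$ afterwards, in Lemmas~\ref{lemm:pfNaruse2}--\ref{lemm:pfNaruse3}.
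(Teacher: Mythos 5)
Your proposal is correct and follows essentially the same route as the paper: verify Property~\eqref{prop:defnweights} for $w_N^{\e}\to\rho_{\e}$ via the $(1/\e)$-Lipschitz behavior of the capped logarithm combined with uniform convergence of the rescaled hooks to $\hbar$ (which the paper bounds by $\sqrt{2}\,\bigl\|\lambda^{(N)}/\sqrt{N}-\psi\bigr\|_{\infty}$ using the strongly stable shape condition), confirm convergence of the rescaled ASC $\frac{1}{\sqrt{N}}\gamma_{\mu^{(N)}}$, and then invoke Theorem~\ref{thm:main} with $n=\sqrt{N}$. Your unified rewriting $\max\{\log s,\log\e\}=\log\max\{s,\e\}$ is a slightly cleaner packaging of the paper's two-case analysis (both values at most $\e$, or both at least $\e$), but the substance is identical, and you correctly flag the same typo (the missing $\log$ in the lemma's left-hand side) that the paper's own proof silently corrects.
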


\begin{proof}
First, we show that the weight function $w^{\e}_N(x,y)$  converges
to $\rho_{\e}(x,y)$, in the sense of
Definition~\ref{def:convweight} verifying property~\eqref{prop:defnweights}.
By convergence of the sequence of shapes, for $N$ large enough, either
both $\hk_{\lambda^{(N)}}(x,y)/\sqrt{N}$ and $\hbar(x,y)$ defined in \eqref{eq:defhbar} are smaller
than or equal $\e$ or both are greater or equal to~$\e$. In the first case,
we have \ts  $w^{\e}_N(x,y) = \rho_{\e}(x,y)=(0,0,\log\e)$,
and property~\eqref{prop:defnweights} vacuously holds.

In the second case we have that for all $(x,y)\in D$~: 
\begin{align*}
\left| w^{\e}_N(x\sqrt{N},y\sqrt{N}) - \rho_{\e}(x,y)\right| \, & =  \, \left| \log \frac{1}{\sqrt{N}} \. \hk_{\lambda^{(N)}}\bigl(\lfloor x
\sqrt{N}\rfloor,  \lfloor y
\sqrt{N}\rfloor\bigr)  - \log \hbar(x,y) \right| \\
&\leq \, k_{\e} \. \left|  \frac{1}{\sqrt{N}} \. \hk_{\lambda^{(N)}}\bigl(\lfloor x
\sqrt{N}\rfloor,  \lfloor y \sqrt{N}\rfloor \bigr)  - \hbar(x,y) \right|,
\end{align*}
where the inequality follows because the logarithm function is $k$-Lipschitz on $[\epsilon, \infty)$,
for some constant~$k_{\e}$.  From the definition of hook lengths
(see Figure~\ref{f:pfskewSYT}:Right), we also have:
\[
\left|  \frac{1}{\sqrt{N}} \. \hk_{\lambda^{(N)}}\bigl(\lfloor x \sqrt{N}\rfloor,  \lfloor y
\sqrt{N}\rfloor\bigr)  - \hbar(x,y) \right|  \leq
\bigl\|\lambda^{(N)}/\sqrt{N} - \psi \bigr\|_{\infty} + \bigl\|\lambda'^{(N)}/\sqrt{N} - \psi^{-1} \bigr\|_{\infty}\..
\]
Thus, by convergence of the sequence of shapes, we have:
\[
\lim_{N\to \infty} \. \left| w^{\e}_N(\sqrt{N}x,\sqrt{N}y) - \rho_{\e}(x,y)\right|
\, \leq  \, \lim_{N\to \infty} k_{\e} \cdot \left(
  \bigl\|\lambda^{(N)}/\sqrt{N} - \psi \bigr\|_{\infty} + \bigl\|\lambda'^{(N)}/\sqrt{N} - \psi^{-1} \bigr\|_{\infty} \right) \,= \. 0.
\]
This proves property~\eqref{prop:defnweights}.

By construction of the sequence of polygonal regions $H_{\lambda^{(N)}/\mu^{(N)}}$, after rescaling by $\sqrt{N}$, the corresponding sequence
of ASCs $\frac{1}{\sqrt{N}}\gamma_{\lambda^{(N)}/\mu^{(N)}}$ converges to $\gamma$ for the infinite norm.  Thus
the weighted variational principle, Theorem~\ref{thm:main},
applies giving
\begin{equation} \label{eq:pfconstant}
\lim_{N\to \infty} \. \frac{1}{N} \. \log Z_N^{\e} \, = \, \Psi_{\e}\bigl(f_{\max}\bigr),
\end{equation}
as desired.
\end{proof}

The next lemma explains what happens to  $\sup_{f \in \Lip_{[0,1](\gamma)}}\Psi_{\e}(f)$ as $\e
\to 0$.

\begin{lemma} \label{lemm:pfNaruse3}
Let $\Psi(f)$ be the integral defined in~\eqref{eq:Ww} for $f \in
\Lip_{[0,1]}(\gamma)$ and the weight function
$$
\rho(x,y)\, := \,  \left( 0, \. 0 \,, \log  \hbar(x,y)  \right)
$$
Then
$$\lim_{\e \to 0} \. \sup_{f \in \Lip_{[0,1]}(\gamma)} \bigl|\Psi_{\e}(f)\. -\. \Psi(f)\bigr| \, = \, 0\ts.
$$
In particular,
$$
\lim_{\e \to 0} \. \sup_{f \in \Lip_{[0,1]}(\gamma)} \Psi_{\e}(f)\,  = \, \sup_{f \in \Lip_{[0,1]}(\gamma)} \Psi(f)\..
$$
\end{lemma}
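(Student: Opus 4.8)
The plan is to control $\Psi_\e(f)-\Psi(f)$ by a quantity that does not depend on $f$ and then let $\e\to 0$. First I would observe that $\rho_\e$ and $\rho$ agree in their first two coordinates and differ only in the third, which is $\max\{\log\hbar,\log\e\}$ for $\rho_\e$ and $\log\hbar$ for $\rho$. By the definition \eqref{def:L} of $L$, the integrands of $\Psi_\e$ and $\Psi$ therefore differ only through the factor $1-\partial_x f-\partial_y f$, so that
\[
\Psi_\e(f)-\Psi(f)\;=\;\int_U \bigl(\log\e-\log\hbar(x,y)\bigr)^{+}\,\bigl(1-\partial_x f-\partial_y f\bigr)\,dx\,dy .
\]
The first factor is non-negative and supported on $\{\hbar<\e\}$, while the constraint $f\in\Lip_{[0,1]}$ forces $0\le 1-\partial_x f-\partial_y f\le 1$. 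Hence $0\le\Psi_\e(f)-\Psi(f)\le I(\e)$ for every $f$, where
\[
I(\e)\;:=\;\int_{U\cap\{\hbar<\e\}}\log\frac{\e}{\hbar(x,y)}\,dx\,dy
\]
is \emph{independent of} $f$. Since $|\sup_f\Psi_\e(f)-\sup_f\Psi(f)|\le\sup_f|\Psi_\e(f)-\Psi(f)|$, once I show $I(\e)\to 0$ both assertions of the lemma follow at once.

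Next I would convert $I(\e)$ into an integral over level sets. Writing $\log(\e/\hbar)=\int_{\hbar}^{\e}t^{-1}\,dt$ and applying Tonelli's theorem (the integrand is non-negative) gives
\[
I(\e)\;=\;\int_0^{\e}\frac{A(t)}{t}\,dt,\qquad A(t):=\area\bigl(\{(x,y)\in U:\hbar(x,y)<t\}\bigr).
\]
The whole matter then reduces to an area estimate for the sublevel sets of $\hbar$ near its zero set: it suffices to prove a linear bound $A(t)=O(t)$, since this yields $I(\e)\le C\e\to 0$. (The same bound shows $\int_U(\log(1/\hbar))^{+}<\infty$, so $\Psi(f)$ and $\Psi_\e(f)$ are finite to begin with.)

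The area estimate is where the real content lies, and it is the step I expect to be most delicate, since $\log\hbar$ has a genuine logarithmic singularity along the whole outer boundary curve $y=\psi(x)$ on which $\hbar$ vanishes. From \eqref{eq:defhbar} and the definition of hook lengths one has, on $U=\cC(\psi/\phi)$,
\[
\hbar(x,y)\;=\;\bigl(\psi(x)-y\bigr)+\bigl(\psi^{-1}(y)-x\bigr)\;\ge\;\psi(x)-y ,
\]
the two summands being the scaled arm and leg, both non-negative inside $\cC$. Hence $\{\hbar<t\}$ is contained in the horizontal strip $\{\psi(x)-t<y\le\psi(x)\}$ of vertical width at most $t$; integrating this width over $x\in[0,a]$ gives $A(t)\le a\,t$, the desired linear bound. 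The point needing care is uniformity up to the endpoints of $\psi$ and across any corners of $U$, but the pointwise inequality $\hbar\ge\psi(x)-y$ holds throughout $\cC$ and requires no regularity of $\psi$ beyond monotonicity, so $A(t)\le a\,t$ survives there unchanged. Combining the three steps gives $\sup_{f}|\Psi_\e(f)-\Psi(f)|\le I(\e)\le a\,\e\to 0$, which proves the lemma.
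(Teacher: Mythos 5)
Your proof is correct, and it takes a genuinely different (and more quantitative) route than the paper's. The paper bounds $|\Psi_{\e}(f)-\Psi(f)|$ via a Cauchy--Schwarz-type inequality that decouples $\rho_{\e}-\rho$ from the gradient terms, bounds the gradient factor by $\area(U)$, and then reduces to showing that $\log\hbar$ is integrable on $U$, so that the integral of the weight discrepancy over the shrinking set $\{\hbar\le\e\}$ tends to $0$ by a domination argument; the integrability is justified by comparing $\hbar$ with the distance to the boundary curve. You instead use the exact identity $\Psi_{\e}(f)-\Psi(f)=\iint_U(\log\e-\log\hbar)^{+}\,(1-\partial_{x}f-\partial_{y}f)\,dx\,dy$ (available because the first two components of $\rho_{\e}-\rho$ vanish identically), the pointwise bound $0\le 1-\partial_{x}f-\partial_{y}f\le 1$ to get an $f$-independent majorant $I(\e)$, and then the layer-cake formula together with the sublevel-set estimate $\area(\{\hbar<t\})\le a\,t$ (which follows from $\hbar\ge\psi(x)-y$) to conclude $I(\e)\le a\,\e$. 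What your route buys: an explicit rate $\sup_f|\Psi_{\e}(f)-\Psi(f)|=O(\e)$ rather than mere convergence; no appeal to Cauchy--Schwarz (whose formulation in the paper, with the exponents $1/2$ placed inside the integrals, is loosely stated); and a cleaner handling of the logarithmic singularity --- note that the paper quotes the upper bound $\hbar(x,y)\le\sqrt{2}\,|\phi(y)-y|$, whereas what the domination of $|\log\hbar|$ near the zero set of $\hbar$ actually requires is a lower bound on $\hbar$, which is exactly the inequality you prove. Both arguments ultimately rest on the same geometric fact, namely that $\hbar$ vanishes only on the outer boundary curve and at least linearly in the distance to it; your write-up isolates that fact and uses it directly, yielding in addition the finiteness of $\Psi(f)$ as a by-product.
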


\begin{proof}
Let $f \in \Lip_{[0,1]}(\gamma)$, since the $||\cdot||_{\infty}$-norm of the partial derivatives of $f$ are
uniformly bounded by $1$ on $U$ (see \eqref{eq:defLipschitz}),  we obtain directly that:
$$
\aligned
\bigl|\Psi_{\e}(f)-\Psi(f)\bigr| \, & = \, \iint_{U} (\rho_{\e}(x_1,x_2)-\rho(x_1,x_2)) \cdot
(\partial_{x_1} f,\partial_{x_2}f,1-\partial_{x_1} f-\partial_{x_2} f) \. dx_1 \ts dx_2 \\
& \le \, \iint_{U} ||\rho_{\e}(x_1,x_2)-\rho(x_1,x_2)||_{\infty}\. dx_1\ts dx_2 \,
\endaligned
$$
The last integral of the previous inequality can we rewritten as:
\begin{eqnarray*}
\iint_{U} ||\rho_{\e}(x_1,x_2)-\rho(x_1,x_2)||_\infty \ts dx_1\ts dx_2 & = &
\iint_{\{\log \hbar(x_1,x_2) \leq \log \e \}} |\log \hbar(x_1,x_2)- \log {\e}| \ts dx_1\ts dx_2 \\
& \leq & \iint_{\{\log \hbar(x_1,x_2) \leq \log \e \}} \biggl(|\log \hbar(x_1,x_2)|+ |\log {\e}|\biggr)\ts \ts dx_1\ts dx_2\ts.
\end{eqnarray*}
The last integral converges to $0$ when $\e$ goes to $0$ provided the
function $\log \hbar$ is integrable on the domain $U$. To show the
integrability of $\log \hbar$ we use the following bounds on $\hbar(x,y)$:
\begin{equation} \label{eq:bound hbar}
|\phi(x)-y| \leq \hbar(x,y) \leq a+b,
\end{equation}
where the upper bound follows from the definition of a hook and the
bound of $\psi(x) \in [0,b]$. The lower bound follows from the
fact that $\hbar(x,y) \geq |\psi(x)-y|$
(see Figure~\ref{fig: bound hook}) and the inequality $\phi(x)\leq
\psi(x)$ implied by the definition of the stable shape
$\psi/\phi$. By taking the log and the absolute value, the bound
\eqref{eq:bound hbar} becomes
\begin{equation} \label{eq:bound hbar transformed}
|\log \hbar(x,y)| \leq \max( \log | \phi(x)-y|, 2\sqrt{2} b).
\end{equation}
Using standard analysis, one can show that the RHS of \eqref{eq:bound hbar transformed} is
integrable on  $U$ and thus the function $\log \hbar$ is
also integrable on $U$, as desired.
\end{proof}

\begin{figure}
\begin{center}
\includegraphics[scale=0.7]{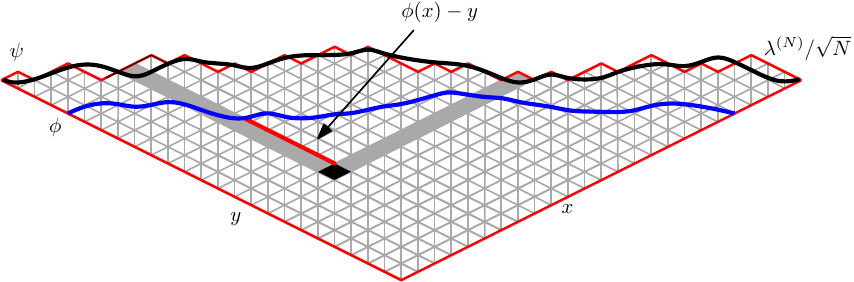}
\end{center}
\caption{Representation of the rescaled hook function
  $\hbar(x,y)$. The segment  of length $\phi(x)-y$ (depicted in red) is a lower bound for the hook length $\hbar(x,y)$ (depicted in gray).}
\label{fig: bound hook}
\end{figure}

Next, we write the log of the partition function $Z_N^{\e}$ in
terms of the log of the partition function $Z_N$.

As a direct corollary of the previous two lemmas we obtain the following.

\begin{corollary}\label{cor:logarithm}
	\[
	\lim_{\e \to 0} \lim_{N \to \infty} \frac{1}{N}\log Z^{\e}_N =  \sup_{f\in \Lip_{[0,1]}(\gamma)} \Psi(f).
	\]
\end{corollary}

\begin{proof}
This follows by combining lemmas~\ref{lemm:pfNaruse1} and \ref{lemm:pfNaruse3}.
\end{proof}

We can now prove our main tool used to prove the main result on
asymptotics of the number of skew SYT.

\begin{theorem} \label{cor:pfNaruse}
We have
\begin{equation}\label{eq:limlim}
\lim_{N \to \infty} \frac{1}{N}\log Z_N = \sup_{f\in \Lip_{[0,1]}(\gamma)} \Psi(f).
\end{equation}
\end{theorem}

\begin{proof}
By Corollary \ref{cor:logarithm} it suffices to show that
\[
 \lim_{N \to \infty} \frac{1}{N}\log Z_N = \lim_{\e \to 0} \lim_{N \to \infty} \frac{1}{N}\log Z^{\e}_N.
\]
Since by definition of $Z^{\e}_N$ and $Z_N$ (see \S~\ref{sec:defptnZepZ}) we
have that $Z^{\e}_N \geq Z_N$ and
\[
\frac{1}{N}\log Z^{\e}_N  \geq  \frac{1}{N}\log Z_N,
\]
it is enough to prove that
 \begin{equation}\label{eq:upperbound}
 \lim_{\e \to 0} \lim_{N \to \infty} \frac{1}{N}\log Z^{\e}_N \leq \lim_{N \to \infty} \frac{1}{N}\log Z_N.
 \end{equation}
By the median inequality we have
\begin{equation} \label{eq:pfmedian}
\frac{Z^{\e}_N}{Z_N} \, \leq \, \max_{h} \. \frac{ \weight^{\e}_N(h)}{\weight_N(h)}\..
\end{equation}
Outside of a border strip of $\mu^{(N)}$ of height $\lfloor \e \sqrt{N}\rfloor$ the weights will not change. The
hooks on the remaining lozenges in the strip are lower bounded by
their depth. So the RHS in \eqref{eq:pfmedian} can be bounded as follows,
\begin{equation} \label{eq:pf2median}
\max_{h} \frac{ \weight_N^{\e}(h)}{\weight_N(h)} \, \leq \, \frac{ (e^{\log \e})^{\e
N}}{ \prod_{k=1}^{\lfloor \e \sqrt{N}\rfloor} (e^{\log  \left( k/(\e \sqrt{N})\right) }  )^{\e \sqrt{N}}}\, = \,
\frac{ (e^{\log \e})^{\e N}}{ \exp\left(\sum_{k=1}^{\lfloor \e \sqrt{N}\rfloor} \e
\sqrt{N} \log \left(k/(\e \sqrt{N})\right) \right)}\..
\end{equation}
We can rewrite the denominator on the RHS above as
\begin{equation} \label{eq:pf3median}
\exp\left(\sum_{k=1}^{\lfloor \e \sqrt{N}\rfloor} \e \sqrt{N} \log
  \frac{k}{\e\sqrt{N}}\right) \,=\, \exp\left(\frac{\e N}{\e
    \sqrt{N}} \sum_{k=1}^{\lfloor \e \sqrt{N} \rfloor} \log \frac{k}{\e \sqrt{N}} \right).
\end{equation}
If $\e$ is fixed and $N$ goes to infinity, the quantity $\frac{1}{\e
	\sqrt{N}} \sum_{k=1}^{\lfloor \e \sqrt{N} \rfloor} \log \frac{k}{\e \sqrt{N}} $ is a Riemann sum for the function $x \to \log x$ on the interval $[0,1]$. Hence
\begin{equation}\label{eq:Riemann_sum}
\frac{1}{\e
	\sqrt{N}} \. \sum_{k=1}^{\lfloor \e \sqrt{N} \rfloor} \log \frac{k}{\e \sqrt{N}} \, =  \,  \int^{1}_0 \log x \ts dx \. + \. o_{\e}(1)
\end{equation}
 where $o_{\e}(1)$ is a function depending on $\e$ which goes to $0$ when $N$ goes to $\infty$.
Thus using \eqref{eq:pf3median} and \eqref{eq:Riemann_sum},  \eqref{eq:pf2median} becomes
\begin{equation} \label{eq:pfmedian-bound}
\max_{h} \frac{ \weight_N^{\e}(h)}{\weight_N(h)} \, \leq \,
\exp\left(\epsilon N\left(\log \epsilon - \int^1_0 \log x dx - \. o_{\e}(1)\right)\right).
\end{equation}
Combining the previous equation with the bound in \eqref{eq:pfmedian},
taking logs, and letting $N \to \infty$ yields the following upper bound
$$\lim_{N \to \infty} \frac{1}{N} \log \frac{Z^{\e}_N}{Z_N}  \leq \lim_{N \to \infty} \frac{1}{N} \log  \frac{ \weight_N^{\e}(h)}{\weight_N(h)}  \leq   \e \left(\log \e -\int^{1}_0 \log x \ts dx \right).$$

Since $\lim_{\e \to 0}   \e \left(\log \e -\int^{1}_0 \log x \ts dx \right)=0$, by taking the limit when
$\e$ goes to $0$ in the previous bound  we obtain \eqref{eq:upperbound} which finishes our proof.

\end{proof}

%
%
%
%

\medskip

\subsection{The number of standard Young tableaux} \label{sec:endpfskewthm}
We are now ready to prove Theorem~\ref{thm:constantskew}.  We require the following
technical result.

\begin{lemma} \label{lem:appskewSYT}  We have:
\[
\frac{1}{N} \left[\. \log \biggl(\. \sum_{h \in H_{\lambda^{(N)}/\mu^{(N)}}} \hookwts_{\lambda^{(N)}}(\tiling_h)\biggr)
 \. - \. \frac{\text{\rm area}(\phi)}{2} \ts N \log N\right] \, \to \, c,
\]
where $c:=\Psi(f_{\max})$ is a constant which depends only on $\psi$ and~$\phi$.
\end{lemma}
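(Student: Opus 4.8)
The plan is to reduce the statement to Corollary~\ref{cor:pfNaruse} by unwinding the definition of $w_N$; essentially no analysis beyond careful bookkeeping of the $\log N$ factors is required. Recall that $w_N = (0,0,\log(\hk_{\lambda^{(N)}}(x,y)/\sqrt N))$ assigns weight $1$ to the two non-horizontal lozenge types and weight $\hk_{\lambda^{(N)}}/\sqrt N$ to each horizontal lozenge, and that every $h \in H_{\lambda^{(N)}/\mu^{(N)}}$ contains exactly $|\mu^{(N)}|$ horizontal lozenges (a common count across tilings, by Proposition~\ref{obs:lozenge2triangles}). Hence $\weight_N(h) = (\sqrt N)^{-|\mu^{(N)}|}\hookwts_{\lambda^{(N)}}(\tiling_h)$, and summing over $h$ gives
\[
Z_N \, = \, (\sqrt N)^{-|\mu^{(N)}|}\sum_{h} \hookwts_{\lambda^{(N)}}(\tiling_h).
\]

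Taking logarithms and subtracting $\tfrac12\area(\phi)N\log N$, I would write
\[
\log\!\Bigl(\sum_{h}\hookwts_{\lambda^{(N)}}(\tiling_h)\Bigr) - \tfrac12\area(\phi)N\log N \, = \, \tfrac12\bigl(|\mu^{(N)}| - \area(\phi)N\bigr)\log N + \log Z_N.
\]
The area condition \eqref{eq:condition_area}, namely $|\mu^{(N)}| = \area(\phi)N + o(N/\log N)$, is exactly what controls the first term: multiplying the error $o(N/\log N)$ by $\log N$ yields $o(N)$, so after division by $N$ it vanishes in the limit. Combined with Corollary~\ref{cor:pfNaruse}, which gives $\tfrac1N\log Z_N \to \Psi(f_{\max}) = c$, this yields the claimed convergence.

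The only point deserving care is this $\log N$ bookkeeping, which is also where the otherwise unmotivated normalization $o(N/\log N)$ in the hypotheses \eqref{eq:condition_area} pays off: it is precisely the scale at which the discrepancy between $|\mu^{(N)}|$ and $\area(\phi)N$, after being amplified by $\log N$, still contributes nothing to the limit. Everything else is a direct consequence of the weighted variational principle as packaged in Corollary~\ref{cor:pfNaruse}, so I do not expect any genuine obstacle here.
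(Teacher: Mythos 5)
Your proposal is correct and takes essentially the same route as the paper's own proof: the identity $Z_N = (\sqrt N)^{-|\mu^{(N)}|}\sum_h \hookwts_{\lambda^{(N)}}(\tiling_h)$, the $\log N$ bookkeeping via condition~\eqref{eq:condition_area}, and then Corollary~\ref{cor:pfNaruse}. The only difference is your explicit appeal to Proposition~\ref{obs:lozenge2triangles} to justify that every tiling has exactly $|\mu^{(N)}|$ horizontal lozenges, a point the paper leaves implicit.
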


\begin{proof}
Recall that for the weight function $w_N(x,y)$ and a height function $h$ in $H_{\lambda^{(N)}/\mu^{(N)}}$ we
have that
\begin{align}
\notag
\hookwts_{\lambda^{(N)}}(\tiling_h) &= \prod_{\lozenge \in \tiling_h} \hk_{\lambda^{(N)}}(x_{\lozenge},y_{\lozenge})
 = (\sqrt{N})^{|\mu^{(N)}|} \times \prod_{\lozenge \in \tiling_h}
e^{w^{i_{\lozenge}}(x_{\lozenge}y_{\lozenge})} \\
& =
(\sqrt{N})^{|\mu^{(N)}|} \times \weight(h),
\label{eq:hk2weight}
\end{align}
where $\weight(h)$ is a defined in~\eqref{eq:defweight-tiling}. Then the log of the partition function of all height functions in $H_{\lambda^{(N)}/\mu^{(N)}}$ equals
\begin{equation}
\label{eq:hk2weightsum}
\log \Bigl(\sum_{h \in H_{\lambda^{(N)}/\mu^{(N)}}} \.
\hookwts_{\lambda^{(N)}}(\tiling_h)\Bigr) \,  = \, \log (\sqrt{N})^{|\mu^{(N)}|} \. + \. \log Z_N,
\end{equation}
where $Z_N = \sum_{h \in
  H_{\lambda^{(N)}/\mu^{(N)}}} \weight(h)$. We treat each of the two summands in the RHS above separately.

By condition \eqref{eq:condition_area} on the area of $\phi$ in the definition of the stable shape we have that
\begin{equation} \label{eq:pfleadterm}
 \log (\sqrt{N})^{|\mu^{(N)}|} \,  = \, \frac{1}{2} \bigl|\mu^{(N)}\bigr| \log N \, = \,
 \frac{\area(\phi)}{2} \ts N\log N \. + \. o(N).
\end{equation}
Next, by Theorem~\ref{cor:pfNaruse} we have
\begin{equation} \label{eq:pfconstant1}
\lim_{N\to \infty} \frac{1}{N} Z_N \. = \. c,
\end{equation}
where $c := \Psi(f_{\max})$ is a constant that only depends on $\psi$ and $\phi$.

Finally, we take the limit as
$N \to \infty$ in  \eqref{eq:hk2weightsum} and use both
\eqref{eq:pfleadterm} and \eqref{eq:pfconstant1} to obtain the desired result.
\end{proof}

\begin{proof}[Proof of Theorem~\ref{thm:constantskew}]
	We take logs in \eqref{eq:NaruseTiling} to obtain
	\begin{equation} \label{eq:initialpf}
	\log f^{\nu^{(N)}} = \log |\nu^{(N)}|! - \left(\sum_{(x,y) \in \lambda^{(N)}}
        \log\hk_{\lambda^{(N)}}(x,y)\right) + \log \left(\sum_{h \in
            H_{\lambda^{(N)}/\mu^{(N)}}} \hookwts_{\lambda}(\tiling_h)\right)
	\end{equation}
	Observe that $|\nu^{(N)}| = N + O(\sqrt{N})$ as $N\to
        \infty$. Then by
        Stirling's formula we have
\begin{equation} \label{eq:appStirling}
\log |\nu^{(N)}|! = N\log N - N + O(\sqrt{N}\log N).
\end{equation}
 Next, we use the definition and compactness of the stable shape $\mathcal{C}(\psi)$
\[
\sum_{(x,y) \in \lambda^{(N)}} \log \hk_{\lambda^{(N)}}(x,y) \, = \,
N\iint_{\mathcal{C}(\psi)} \. \log\bigl(\sqrt{N}\hbar(x,y)\bigr) \ts dx\ts dy \. + \. o(N),
\]
where the leading $N$ outside the integral comes from a change of variables $x \to \sqrt{N}x$,
$y\to \sqrt{N}y$ and the $\sqrt{N}$ inside the integral comes from
rewriting $\hk_{\lambda^{(N)}}(\cdot ,\cdot)$ in terms of $\hbar(x,y)$
defined in \eqref{eq:defhbar}. The error term $o(N)$ comes from
approximating the sum with the scaled integral (cf. \cite[Thm. 6.3]{MPP4}).

By linearity of integration with respect to the integrand $\frac{1}{2}\log N + \log \hbar(x,y)$ we obtain
	\begin{equation} \label{eq:loghookslam}
	\sum_{(x,y) \in \lambda^{(N)}} \log \hk_{\lambda^{(N)}}(x,y) \, = \,
        \frac{\area(\psi)}{2} \ts N \ts \log N \. + \. k(\psi)N \. + \. o(N),
	\end{equation}
where \ts $k(\psi) = \iint_{\mathcal{C}(\psi)} \hbar(x,y) \. dx\ts dy$.
Lastly, applying to each term in~\eqref{eq:initialpf} the bounds from
\eqref{eq:appStirling}, \eqref{eq:loghookslam} and
Lemma~\ref{lem:appskewSYT} respectively we obtain
\[
\log f^{\nu^{(N)}} \, =\, \left(1-\frac{\area(\psi/\phi)}{2}\right) \. N \ts \log N \. + \. c(\psi/\phi)N \. + \. o(N),
\]
where $c(\psi/\phi):=c+k(\psi)$ is the sum of the constant  $c$ from
Lemma~\ref{lem:appskewSYT} and $k(\psi)$. Finally, since \ts $\area(\psi/\phi)=1$, the result follows.
\end{proof}

We end this section by extracting from the proof above the explicit
expression for the  constant of
Theorem~\ref{thm:constantskew}.

\begin{corollary} \label{cor:explicit-constantskew}
The constant $c(\psi/\phi)$ of Theorem~\ref{thm:constantskew} is given by
\[
c(\psi/\phi) \,:=\, k(\psi) \. + \. \Psi(f_{\max}),
\]
where
\begin{align*}
k(\psi) \. & = \.\iint_{\mathcal{C}(\psi)} \hbar(x,y) \. dx\ts dy,\\
\Psi(f) \. &= \. \iint_U \Bigl( \sigma(\nabla f)
\. + \. (1-\partial_{x} f - \partial_{y} f) \ts \log \hbar(x,y) \Bigr) \ts dx\ts dy,
\end{align*}
for $\sigma(\cdot)$ defined by~\eqref{eq:Lob}, $U$ is the
region enclosed by the projection of $\gamma$ from Definition~\ref{def:gamma}, and $f_{\max}:U\to \mathbb{R}$ is the only function in $\Lip_{[0,1]}(\gamma)$ which maximizes the integral $\Psi(f)$.
\end{corollary}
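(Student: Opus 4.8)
The plan is to treat this corollary as a bookkeeping statement that collects and makes explicit the constants already produced in the proof of Theorem~\ref{thm:constantskew}; no fresh analysis is required beyond substituting the relevant weight function. First I would recall that the proof of Theorem~\ref{thm:constantskew} established the decomposition $c(\psi/\phi) = c + k(\psi)$, where $c := \Psi(f_{\max})$ is the constant from Lemma~\ref{lem:appskewSYT}, and where $k(\psi) = \iint_{\mathcal{C}(\psi)} \hbar(x,y)\, dx\, dy$ is exactly the coefficient of $N$ in~\eqref{eq:loghookslam}. This already gives both the claimed identity $c(\psi/\phi) = k(\psi) + \Psi(f_{\max})$ and the stated formula for $k(\psi)$, so the only remaining task is to unwind the definition of $\Psi(f_{\max})$.

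Next I would substitute the correct limiting weight. By Corollary~\ref{cor:pfNaruse} together with Lemma~\ref{lemm:pfNaruse3}, the functional $\Psi$ appearing in $c$ is the one attached to $\rho(x,y) = (0,0,\log\hbar(x,y))$. Plugging this $\rho$ into the definition~\eqref{def:L} of $L$, the first two components vanish and the dot product collapses to
\[
L(x,y,\nabla f) = \log\hbar(x,y)\cdot\bigl(1 - \partial_x f - \partial_y f\bigr).
\]
Inserting this into~\eqref{eq:Ww} then gives
\[
\Psi(f) = \iint_U \Bigl(\sigma(\nabla f) + (1 - \partial_x f - \partial_y f)\log\hbar(x,y)\Bigr)\, dx\, dy,
\]
which is precisely the integrand in the statement, with $\sigma$ as in~\eqref{eq:Lob}.

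Finally I would identify $\Psi(f_{\max})$ with the claimed maximum. Corollary~\ref{cor:pfNaruse} gives $c = \sup_{f\in\Lip_{[0,1]}}\Psi(f)$, and by the variational principle (Theorem~\ref{thm:main}) this supremum is attained at the unique extension $f_{\max}$ of the boundary curve, so $\Psi(f_{\max}) = \max_{f\in\Lip_{[0,1]}}\Psi(f)$; combined with the integrand above this yields the stated expression. As in Theorem~\ref{thm:main}, the domain $U$ is the projection of the limiting curve, which here is the region enclosed by $\phi$. Since the whole argument is assembly, there is no substantive obstacle; the one point worth flagging is that the explicit $\rho$ is the uncapped limit $\log\hbar$ rather than the truncation $\rho_\e$, and passing to it is legitimate precisely because Lemma~\ref{lemm:pfNaruse3} shows $\sup_f|\Psi_\e(f) - \Psi(f)| \to 0$ as $\e \to 0$, so the cap introduced to secure convergence disappears in the limit.
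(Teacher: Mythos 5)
Your proposal is correct and follows exactly the paper's route: the paper gives no independent proof of this corollary, stating only that it is ``extracted from the proof'' of Theorem~\ref{thm:constantskew}, and your assembly---recalling $c(\psi/\phi)=c+k(\psi)$ with $k(\psi)$ the coefficient of $N$ in~\eqref{eq:loghookslam} and $c=\Psi(f_{\max})$ from Lemma~\ref{lem:appskewSYT}, then substituting $\rho=(0,0,\log\hbar)$ into~\eqref{def:L} and~\eqref{eq:Ww} and invoking Corollary~\ref{cor:pfNaruse}, Lemma~\ref{lemm:pfNaruse3}, and Theorem~\ref{thm:main} to identify the supremum as a maximum attained at the unique $f_{\max}$---is precisely that extraction made explicit. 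Your closing remark on why the uncapped weight $\log\hbar$ may replace the truncation $\rho_\e$ is exactly the role Lemma~\ref{lemm:pfNaruse3} plays in the paper.
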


\bigskip

\section{Proof of the weighted variational principle} \label{sec:proofwtvarprinciple}
Our strategy to prove this theorem consists of three parts. In the first part
we give a lemma (Lemma~\ref{entropy_fundamental_domain}) that shows that
fundamental domains with similar plane-like boundary conditions have the
same number of tilings and that all those tilings contain a similar number
of lozenges of each type. Both numbers depend on the slope of the domain.
In the second part we give a lemma (Lemma~\ref{lemma:weighted}) that shows
that the weighted contribution of lozenges with similar plane-like boundary
conditions is also the same. Finally, in the third part we use the two
previous lemmas to prove the weighted variational principle.

\medskip

\subsection{Tilings of similar plane-like regions (unweighted)}
Let $(s,t)$ be a pair of numbers such that $\{ 0 \leq s,t,1-s-t\leq 1 \}$, let $\epsilon> 0$
and let $D_m$ be the $m \times m$ diamond of the hexagonal grid whose left
corner is the origin.
		
Let	$\overline{H}^{\e}_{D_m}(s,t)$ be the set of admissible boundary
height functions $\bar{h}:  \partial D_m \rightarrow \Z $, such that:
		\begin{itemize}
			\item the left corner of the diamond has height $0$
			\item for all $x=(x_1,x_2) \in \partial D_m $ we have
			$$| \bar{h}(x_1,x_2) -( sx_1+tx_2 ) | \leq \e m.$$
		\end{itemize}

	\begin{lemma}\label{entropy_fundamental_domain}
		Let $(s,t)$ be such that $\{ 0 \leq s,t,1-s-t\leq 1 \}$, let $\epsilon
		> 0$ and let $D_m\subset \mathbb{Z}^2$ and
                $\overline{H}^{\e}_{D_m}(s,t)$ be as defined above
              . Then
		for each $\bar{h} \in \overline{H}^{\e}_{D_m}(s,t)$ we
                have that
\begin{equation}
		\label{eq:lem1item1}
\lim_{m \to \infty} \frac{1}{m^2} \log \numtilings(\bar{h}) = \lim_{m \to
			\infty} \frac{1}{m^2} \log \sum_{\bar{h}
			\in\overline{H}^{\e}_{D_m}(s,t)} N(\bar{h}) =
                      \sigma(s,t)+ O (\e \log(1/\e)),
\end{equation}
and
\begin{equation}
		\label{eq:lem1item2}
		\lim_{m \to \infty} \frac{1}{m^2} \left(\log \numlozenge^{(1)}(\bar{h}),
		\log \numlozenge^{(2)}(\bar{h}),\log
                \numlozenge^{(3)}(\bar{h})\right) =
		(s,t,1-s-t) + O(\e){\bf 1}.
\end{equation}
	\end{lemma}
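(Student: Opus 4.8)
The two displayed statements have rather different characters, so I would prove them separately, beginning with the lozenge counts \eqref{eq:lem1item2}, which do not involve any entropy (I read the right-hand side as the statement about the densities $\frac1{m^2}\numlozenge^{(i)}(\bar h)$, the counts being of order $m^2$). By Proposition~\ref{obs:lozenge2triangles} the triple $\bigl(\numlozenge^{(1)}(\bar h),\numlozenge^{(2)}(\bar h),\numlozenge^{(3)}(\bar h)\bigr)$ depends only on the boundary data $\bar h$ and not on the chosen extension, so each $\numlozenge^{(i)}$ is a functional of $\bar h$ alone. I would show it is in fact an affine functional: viewing a height function as a stepped surface, the proportion of lozenges of type $i$ equals the $i$-th barycentric component of the average slope $\frac1{|D_m|}\iint_{D_m}\nabla h$, and by the discrete divergence theorem this average is a boundary expression, linear in the values of $\bar h$. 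For the exact planar boundary of slope $(s,t)$ the average slope is $(s,t)$, giving densities $(s,t,1-s-t)$ up to an $O(1/m)$ perimeter correction. Since every $\bar h\in\overline H^{\e}_{D_m}(s,t)$ differs from this plane by at most $\e m$ on the $O(m)$ boundary vertices, the affine functional changes by at most $O(\e m^2)$; dividing by $m^2$ yields $\frac1{m^2}\bigl(\numlozenge^{(1)},\numlozenge^{(2)},\numlozenge^{(3)}\bigr)=(s,t,1-s-t)+O(\e)\mathbf 1$, which is \eqref{eq:lem1item2}.

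For the entropy \eqref{eq:lem1item1} I would first dispose of the equality between the single-boundary quantity and the summed one. Along each of the four sides of $\partial D_m$ the admissible heights form a monotone lattice path of length $O(m)$ confined to a band of width $\e m$, so $|\overline H^{\e}_{D_m}(s,t)|\le e^{O(m)}=e^{o(m^2)}$. Hence $\numtilings(\bar h)\le\sum_{\bar h'}\numtilings(\bar h')\le e^{o(m^2)}\max_{\bar h'}\numtilings(\bar h')$, and the two limits in \eqref{eq:lem1item1} coincide as soon as one establishes a \emph{uniform} estimate $\frac1{m^2}\log\numtilings(\bar h)=\sigma(s,t)+O(\e)$ valid for every $\bar h$ in the band; note that $O(\e)\subseteq O(\e\log(1/\e))$, so this is at least as strong as the stated error. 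The value $\sigma(s,t)$ enters through the classical fact that the number of tilings of a diamond carrying the \emph{exact} planar boundary of slope $(s,t)$ grows like $e^{m^2(\sigma(s,t)+o(1))}$, with $\sigma$ given by the Lobachevsky formula \eqref{eq:Lob}; this is the maximal-entropy dimer computation of \cite{CKP01,Ken09}, and for irrational slopes it follows from the rational case together with the continuity of $\sigma$, the rounding of the plane to an admissible height function costing only $O(m)$.

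I would obtain the uniform two-sided bound by a sandwiching argument whose engine is the extension Lemma~\ref{lem:extension}. For the lower bound, fix $\bar h$, tile the concentric sub-diamond $D_{(1-c\e)m}$ with the exact planar boundary in any of its $e^{(1-c\e)^2 m^2(\sigma+o(1))}$ ways, and fill the surrounding annulus with one fixed height function matching the planar inner boundary and $\bar h$ on $\partial D_m$; since the two boundaries differ by $O(\e m)$ across a width proportional to $\e m$, Lemma~\ref{lem:extension} (with the constant $c=c(s,t)$ chosen large enough that the Lipschitz inequality \eqref{eq:lips} holds across the annulus) guarantees such a filling, giving $\numtilings(\bar h)\ge e^{m^2\sigma-O(\e)m^2}$. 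For the upper bound I would instead enlarge: every $\bar h$-tiling of $D_m$ extends, again by Lemma~\ref{lem:extension}, to a tiling of the super-diamond $D_{(1+c\e)m}$ carrying the exact planar boundary, because the band data on $\partial D_m$ lies within Lipschitz reach of the plane on $\partial D_{(1+c\e)m}$. The restriction map from planar-boundary tilings of $D_{(1+c\e)m}$ onto in-band tilings of $D_m$ is therefore surjective, whence $\sum_{\bar h'}\numtilings(\bar h')\le e^{(1+c\e)^2 m^2(\sigma+o(1))}=e^{m^2\sigma+O(\e)m^2}$. Combining the two bounds with the reduction of the previous paragraph yields the required uniform estimate and hence both equalities in \eqref{eq:lem1item1}.

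The main obstacle is the clean execution of the annular gluing. One must verify that \eqref{eq:lips} genuinely holds between every interior point carrying $\bar h$ and every target point carrying the plane: a direct computation shows the inner value may sit $\e m$ \emph{above} the plane, so the outward Lipschitz slack across an annulus of width $w$ is only $\bigl(1-s\bigr)w$ (or $(1-t)w$, depending on direction), forcing $w$ to be a \emph{slope-dependent} multiple of $\e m$ and making the constant $c=c(s,t)$ blow up as the slope approaches the boundary of the admissible simplex. For fixed interior $(s,t)$ this gives the error $O_{s,t}(\e)$ above, while uniform control near the degenerate slopes $s,t,1-s-t\to 0$ (where the tiling freezes and $\sigma\to 0$) is exactly where the weaker, slope-uniform bound $O(\e\log(1/\e))$ is the natural output and must be argued separately. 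The second, more conceptual, point of care is the identification of the combinatorial growth rate of the planar-boundary count with the analytic $\sigma$ of \eqref{eq:Lob}, including the passage from rational to irrational slopes; everything else is bookkeeping with $O(\e)$ perimeter-to-area comparisons.
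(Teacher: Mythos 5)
Your argument for \eqref{eq:lem1item2} is correct and genuinely different from the paper's: you obtain the tile counts as affine functionals of the boundary values (telescoping the height increments along lattice lines, via Proposition~\ref{obs:lozenge2triangles}), whereas the paper transfers the exact counts $(m^2s,\ts m^2t,\ts m^2(1-s-t))$ of periodic torus configurations through its gluing construction. Your route for that part is cleaner and self-contained. For \eqref{eq:lem1item1} your sandwich via Lemma~\ref{lem:extension} is the same mechanism as the paper's, but with a different reference ensemble: the paper does not assume the planar-boundary entropy fact; it starts from the torus free energy \cite[Thm.~8]{Ken09} together with a concentration estimate (the martingale argument of \cite[Prop.~22]{CEP96}) to produce torus configurations staying within $\e m$ of the plane, and sandwiches $\bar h$ between those on $D_{m(1-3\e)}$ and $D_{m(1+3\e)}$. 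Citing the exact-planar count $e^{m^2(\sigma(s,t)+o(1))}$ as classical is defensible, but be aware it is essentially the $\e\to 0$ case of the lemma you are proving; the paper's more primitive inputs are what keep the argument self-contained.

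There are two problems with your treatment of \eqref{eq:lem1item1}, one fixable and one a genuine gap. The fixable one: your upper bound extends each $\bar h$-tiling of $D_m$, \emph{as a whole}, to a planar-boundary tiling of $D_{(1+c\e)m}$, so Lemma~\ref{lem:extension} must be applied with $S$ containing every interior point of $D_m$. But interior heights of an in-band tiling are not within $O(\e m)$ of the plane: for slopes in the interior of the simplex the maximal extension of $\bar h$ sits above the plane by an amount of order $m$ (times a slope-dependent constant) at the center of the diamond, so the Lipschitz condition \eqref{eq:lips} against outer planar data only $O(\e m)$ away fails, and no choice of annulus width of order $\e m$ repairs it. Your diagnosis in the last paragraph only concerns boundary-to-boundary slack and misses this. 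The fix is to glue each $\bar h$-tiling to \emph{one fixed} annulus tiling, whose existence needs only boundary-to-boundary compatibility; the glued object is automatically a height function, and the map is injective into the tilings of the larger region. This is what the paper does (and what your lower bound already implicitly does).

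The genuine gap is the degenerate-slope regime, which you name but do not prove. Your constants $c(s,t)$ blow up as a component of $(s,t,1-s-t)$ degenerates, and you defer that case as something that ``must be argued separately.'' It cannot be deferred: in the application (Lemma~\ref{lemma:weighted} and the proof of Theorem~\ref{thm:main}) the slope is $\nabla \wt{f}$ on mesh triangles, which does take frozen, near-degenerate values, and this regime is precisely the source of the $O(\e\log(1/\e))$ error in the statement -- so the lemma is simply not established without it. The paper fills it with a short direct count (its Case~2): if $\e \geq \frac12\bigl(1-\max\{s,t,1-s-t\}\bigr)$, say with $1-s-t$ maximal, then every extension of $\bar h$ has height variation $O(\e m)$ along each vertical section, hence at most $\lfloor 4\e m^2\rfloor$ non-horizontal lozenges. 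This yields \eqref{eq:lem1item2} at once, and bounds
\begin{equation*}
N(\bar h) \. \leq \. \binom{m^2}{\lfloor 4\e m^2\rfloor}\ts 2^{\lfloor 4\e m^2\rfloor} \. = \. e^{m^2\ts O(\e\log(1/\e))},
\end{equation*}
which matches $\sigma(s,t)=O(\e\log(1/\e))$ in this regime (from \eqref{eq:Lob}, since $\Lob(x)\sim -x\log|x|$ near $0$). Adding this case -- or an equivalent argument -- is necessary to complete your proof.
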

	
		\begin{figure}
		\includegraphics[scale=1]{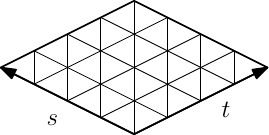}
		\caption{The slope of a periodic tiling.}
		\label{fig:slope}
		\end{figure}

	\begin{proof}
		Let  $\mathcal{P}_m(s,t)$ be the set of tilings of $D_m$ with periodic
		boundary conditions with slope $(s,t)$ and ${N}_m(s,t)$ be the
		number of tilings in $\mathcal{P}_m(s,t)$. Note that
		$\mathcal{P}_m(s,t)$  is also the set of tilings of a torus with
		slope $(s,t)$. By \cite[Thm.~8]{Ken09} we have that:
		\[
		\frac{1}{m^2}\ts \log N_m(s,t) \. = \. \sigma(s,t)\ts +\ts o(1),
		\]
		and that each of those tilings has exactly $\{m^2s,m^2t,m^2(1-s-t)\}$ lozenges of each type.
		Additionally, if we choose a height function uniformly amongst all height functions in $\mathcal{P}_m(s,t)$ then we have the following concentration results:
		\begin{equation} \label{eq:concentration}
		\mathbb{P}{\Big (}|h(x_1,x_2)-(sx_1+tx_2)| \ts \geq \ts \e{\Big)} \, \leq \, e^{4\e m}.
		\end{equation}
This can be shown by applying the same martingale
argument as in \cite[Prop. 22]{CEP96}. Although the
argument in this paper is made for simply connected
regions, it extends for tilings of a torus with given slopes.

Denote by $\mathcal{P}^{\e}_m(s,t)$ the set of periodic configurations on a torus of size $m$ whose height function stays within $\e m$ of a linear plane of slope $(s,t)$ that is:
$$\mathcal{P}^{\e}_m(s,t) \, := \, \left\{ \ts h \in \mathcal{P}_m(s,t) ~:~\max_{x \in D_m} \bigl\{|h(x_1,x_2)-(sx_1+tx_2)|\bigr\} \geq \e m\right\}.
$$
Let $N^{\e}_m(s,t)$ be the size of $\mathcal{P}^{\e}_m(s,t)$.
As a direct consequence of the inequality~\eqref{eq:concentration}, we have:
\[
\frac{1}{m^2} \log \left( N_m(s,t)(1-e^{-c\e m}) \right) \leq  \frac{1}{m^2} \log N^{\e}_m(s,t) \leq \frac{1}{m^2} \log N_m(s,t)
\]
Therefore,
\[
\lim_{m \to \infty} \frac{1}{m^2}\log N^{\e}_m(s,t) \. = \. \sigma(s,t).
\]
We must now distinguish between the case where \ts
$\e\leq \frac{1}{2}(1-\max\{s,t,1-s-t\})$  and the case
$\e >\frac{1}{2}\max\{s,t,1-s-t\}$.

\smallskip

\noindent {\em Case~1:} \ts Suppose \ts $ \e  \leq \frac{1}{2}(1-\max\{s,t,1-s-t\})$.
Consider \ts $\bar{h}  \in \overline{H}^{\e}_{D_{m}}(s,t)$ \ts and \ts $h_- \in
\mathcal{P}^{\e}_{m(1-3\e)}(s,t)$. For all $x=(x_1,x_2) \in \partial
D_{m(1-3\e)}$ and $y=(y_1,y_2) \in \partial D_m$  we have
		\begin{eqnarray*}
			\lefteqn{\bar{h}(y)-h_-(x) \, \leq }\\
& \, \leq \,  &  \biggl[\bar{h}(y)-(sy_1+ty_2)\biggr] \,+\, \biggl[(sy_1
         +ty_2)-(sx_1+tx_2)\biggr]  \,+\, \biggl[(sx_1+tx_2)-h_-(x)\biggr] \\
			& \, \leq \, & \e \ts m \. + \. \max\{s,t,1-s-t\}\cdot \max\{y_1-x_1,y_2-x_2\} \.+ \.\e \ts m \\
			& \, \leq \,  &  {\bigl(}1-\max\{s,t,1-s-t\}{\bigr)}\|x-y\|_1 \. + \.\max\{s,t,1-s-t\}\cdot \max\{y_1-x_1,y_2-x_2\} \\
			& \, \leq \, & \max\{y_1-x_1,y_2-x_2\}.\\
		\end{eqnarray*}
		
\nin	
where $\|\cdot\|_1$ denotes the $1$-norm.

\begin{figure}[hbt]
			\includegraphics[scale=0.7]{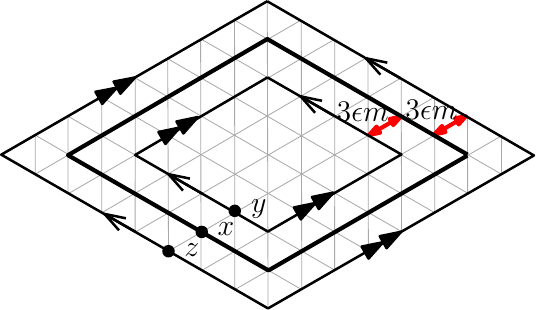}
			\caption{Illustration of the proof of Lemma~\ref{entropy_fundamental_domain}. The number of
				tilings with boundary conditions in $\overline{h} \in \partial D_m$
				is at least the number of tilings with periodic boundary conditions
				in $\partial D_{m(1-3\epsilon)}$ and at most the number of tilings
				with periodic boundary conditions in
                                $\partial D_{m(1+3\epsilon)}$.}
			\label{fig:lemma1}
		\end{figure}

Using Lemma~\ref{lem:extension},
we deduce that there exist a height function $h$ on $D_m$  such that
$h=\bar{h}$ on $\partial{D_m}$ and $h=h_-$ on $\partial{D_{m(1-3\e)}}$.
As a consequence, we obtain that  $N(\bar{h}) \geq N^{\e}_{m(1-3\e)}(s,t)$.
For the same reasons, for
$h_+ \in \mathcal{P}^{\e}_{m(1+3\e)}(s,t)$, for all $x \in \partial D_m$
and $z \in \partial D_{m(1+3\e)}$ we have:
\[
|\bar{h}(x)-h_+(z)| \. \leq  \. \min\{z_1-x_1,z_2-x_2\}.
\]
Thus, every boundary height functions in \ts $\mathcal{P}^{\e}_{m(1+3\e)}(s,t)$ \ts
can be extended to $\bar{h}$ on $\partial D_m$. This implies:
\[
N^{\e}_{m(1-3\e)}(s,t) \. \leq \. N(\bar{h}) \. \leq \. N^{\e}_{m(1+3\e)}(s,t),
\]
which can be rewritten as
\[
\frac{1}{m^2} \log N^{\e}_{m(1-3\e)}(s,t) \, \leq \,\frac{1}{m^2} \.
\log N(\bar{h}) \, \leq \, \frac{1}{m^2} \. \log N^{\e}_{m(1+3\e)}(s,t).
\]
Since \ts $1/(m^2(1-3\e)) = 1/m^2+O(\e)$, we deduce that
$$\lim_{m \to \infty} \. \frac{1}{m^2} \.\log N(\bar{h}) \, = \, \sigma(s,t) \. + \. O(\e).
$$
Finally, we can bound the number of boundary conditions in
$H^{\e}_{D_m}(s,t)$ by the number of different types of lozenges to
the power of the length of $\partial D_m$. Since there are at most $3^{4m}=
e^{o(m^2)}$ different boundary height functions in
$\overline{H}^{\e}_{D_m}(s,t)$, then this allows us to deduce~\eqref{eq:lem1item1}:
\[
\lim_{m \to \infty} \frac{1}{m^2} \log  \sum_{\bar{h}  \in\overline{H}^{\e}_{D_m}(s,t)} N(\bar{h}) = \sigma(s,t)+ O (\e).
\]
For the second part of the statement, we notice that when attaching two tilings as described
above (see Figure~\ref{fig:lemma1}), we are adding at most $\e^2m^2$ tilings of each type. Hence we obtain that for all $i \in \{1,2,3\}$:
\[
\mathcal{L}^{(i)}(h_-)-\e^2m^2 \leq  \mathcal{L}^{(i)}(\bar{h})\leq \mathcal{L}^{(i)}(h_+)+\e^2m^2.
\]
Dividing by $m^2$ and taking the logarithm, we obtain~\eqref{eq:lem1item2}.
		
\medskip

\noindent {\em Case 2:} \ts Suppose \ts $ \e  \geq
\frac{1}{2}(1-\max\{s,t,1-s-t\})$.  Let \ts $\bar{h} \in
\overline{H}^{\e}_{D_{m}}(s,t)$. Without loss of generality we can assume
that \ts $\max\{s,t,1-s-t\}=1-s-t$ so that \ts $ \e  \geq (s+t)/2$.
The height difference between the top vertex and the bottom
vertex of each vertical section of $D_m$ is at most \ts $4\ts \e \ts m$.
Hence, each of such vertical sections contains at most \ts
$\lfloor 4 \e m \rfloor$ vertical edges. This means that the total number of
        non-horizontal lozenges in each tiling of a height function
        that extends $ \bar{h}$ is
        smaller than \ts $\lfloor 4\ts \epsilon \ts m^2\rfloor$ \ts and implies directly (\ref{eq:lem1item2}). Notice that we can determine a tiling
        by specifying what is the position of the non-horizontal lozenges and their
        types. Hence the total number of tilings $N(\bar{h})$ is bounded by
        ${m^2 \choose \lfloor 4\e m^2 \rfloor}2^{\lfloor 4\ts \epsilon \ts m^2 \rfloor}$. By using Stirling's formula, we obtain
\[
N(\bar{h}) \leq {m^2 \choose \lfloor 4\e m^2 \rfloor}2^{\lfloor 4\ts \epsilon \ts m^2\rfloor}
\. = \. e^{m^2O(\e \log(1/\e))}\ts.
\]
Therefore, the total number of configurations with boundary $\bar{h}$ satisfies
		\[
		\frac{1}{m^2}\. N(\bar{h})\, = \, O\bigl(\epsilon \log(1/\e) \bigr)\. + \.o(1).
		\]
		Since $\sigma(0,0)=0$, this implies (\ref{eq:lem1item1}) and concludes our proof.
	\end{proof}

	Lemma~\ref{entropy_fundamental_domain} holds when we replace
        lozenges by equilateral triangles. This will be useful for the
        remainder of the proof as explained in Section~\ref{subsec:weighted}.
	
	\begin {corollary} \label{cor:mainlemmaTriangles}
Let $T_m$ be an equilateral triangle of size $m$ and
        $\overline{H}^{\e}_{T_m}(s,t)$ be as defined above. Then
        for each $\bar{h} \in \overline{H}^{\e}_{T_m}(s,t)$ we
        have that
        \begin{equation}
        \label{eq:lem1item1triangles}
        \lim_{m \to \infty} \frac{4}{\sqrt{3}m^2} \log N(\bar{h}) \, = \, \lim_{m \to
        	\infty} \.  \frac{4}{\sqrt{3}m^2} \. \log \left(\sum_{\bar{h}
        	\in\overline{H}^{\e}_{D_m}(s,t)} N(\bar{h})\right) \, = \,
        \sigma(s,t) \. + \. O\bigl(\e \log(1/\e)\bigr),
        \end{equation}
and
        \begin{equation}
        \label{eq:lem1item2triangles}
        \lim_{m \to \infty} \frac{4}{\sqrt{3}m^2} \left(\log \mathcal{L}^{(1)}(\bar{h}),
        \log \mathcal{L}^{(2)}(\bar{h}),\log
        \mathcal{L}^{(3)}(\bar{h})\right) \, = \,
        (s,t,1-s-t) \. + \. O(\e){\bf 1}.
        \end{equation}
\end{corollary}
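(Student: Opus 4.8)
The plan is to re-run the three-step argument of Lemma~\ref{entropy_fundamental_domain} with the triangular domain $T_m$ in place of the diamond $D_m$; all three ingredients are insensitive to the precise shape of the region, so only the geometric normalization changes. Concretely, the three inputs are: (i) the torus/periodic entropy count from~\cite[Thm.~8]{Ken09}, which produces the density $\sigma(s,t)$ together with the lozenge-type proportions $(s,t,1-s-t)$ and depends only on the slope; (ii) the concentration estimate~\eqref{eq:concentration}, obtained from the martingale argument of~\cite{CEP96}, which guarantees that a uniformly chosen slope-$(s,t)$ periodic configuration stays within $\e m$ of the plane $s x_1 + t x_2$ with overwhelming probability; and (iii) the Lipschitz extension Lemma~\ref{lem:extension}, which lets us glue a prescribed plane-like boundary on $\partial T_m$ to an interior configuration. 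Since the area of an equilateral triangle of side $m$ is $\frac{\sqrt{3}}{4}m^2$, the natural normalization becomes $\frac{4}{\sqrt{3}m^2}$, which is exactly the factor appearing in~\eqref{eq:lem1item1triangles} and~\eqref{eq:lem1item2triangles}.

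The one structural point that needs care, and which I expect to be the main obstacle, is that a triangle is \emph{not} a fundamental domain of a torus, so there is no literal ``periodic triangle'' to sandwich with. I would resolve this by restricting genuine torus configurations to a triangular window: take a uniformly random slope-$(s,t)$ tiling of the torus of size $m$ and restrict it to the concentric triangle $T_{m(1-3\e)} \subset D_m$. By~\eqref{eq:concentration}, all but an $e^{-c\e m}$ fraction of these configurations are within $\e m$ of the plane on the whole window, so their restrictions lie in $\overline{H}^{\e}_{T_{m(1-3\e)}}(s,t)$. The height-difference computation of Case~1 of Lemma~\ref{entropy_fundamental_domain}, which bounds $\bar{h}(y)-h_-(x) \le \max\{y_1-x_1,y_2-x_2\}$, uses only the slope bound $\max\{s,t,1-s-t\}$ and the $\e m$ plane-likeness, and is therefore unchanged for the triangular boundary. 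Lemma~\ref{lem:extension} then fills the annular region between $\partial T_{m(1-3\e)}$ and $\partial T_m$, giving the lower bound $\numtilings(\bar{h}) \ge N^{\e}_{T_{m(1-3\e)}}(s,t)$; the matching upper bound follows by extending each plane-like boundary condition on $\partial T_{m(1+3\e)}$ to $\bar{h}$ on $\partial T_m$.

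Putting these together, in the nondegenerate regime $\e \le \frac{1}{2}\bigl(1-\max\{s,t,1-s-t\}\bigr)$ the sandwich $N^{\e}_{T_{m(1-3\e)}}(s,t) \le \numtilings(\bar{h}) \le N^{\e}_{T_{m(1+3\e)}}(s,t)$, together with $1/(m^2(1\pm 3\e)^2) = (1+O(\e))/m^2$, yields $\frac{4}{\sqrt{3}m^2}\log \numtilings(\bar{h}) = \sigma(s,t) + O(\e)$; in the degenerate regime $\e \ge \frac{1}{2}\bigl(1-\max\{s,t,1-s-t\}\bigr)$ the crude binomial bound of Case~2 (bounding tilings by the placement of $O(\e m^2)$ non-horizontal lozenges) applies verbatim and contributes the $O(\e\log(1/\e))$ error. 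Summing over the at most $3^{O(m)} = e^{o(m^2)}$ admissible boundary conditions then gives~\eqref{eq:lem1item1triangles}. Finally,~\eqref{eq:lem1item2triangles} follows the same way: the torus count fixes the proportions of the three lozenge types as $(s,t,1-s-t)$, the gluing annulus between $T_{m(1\pm 3\e)}$ and $T_m$ perturbs each count $\numlozenge^{(i)}$ by at most $O(\e^2 m^2)$, and dividing by the area normalization yields the proportions up to $O(\e)$. An alternative, slightly less self-contained route would be to glue two copies of $T_m$ along a diagonal into the diamond $D_m$ and transfer the estimate of Lemma~\ref{entropy_fundamental_domain} directly; the restriction-to-a-window argument above is cleaner because it keeps the boundary data plane-like throughout.
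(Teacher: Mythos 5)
Your proposal correctly isolates the crux --- a triangle is not a fundamental domain of the torus --- but the window-restriction device you propose does not actually deliver either side of the sandwich, and this is exactly where the paper does something different. On the lower-bound side, gluing via Lemma~\ref{lem:extension} does give $\numtilings(\bar h) \geq$ (number of \emph{distinct} restrictions of plane-like torus tilings to $T_{m(1-3\e)}$), but you never bound that number of distinct restrictions from below, and Kenyon's torus count alone cannot do it: the restriction map is massively non-injective, because the complement of the triangle occupies roughly half the torus and carries its own entropy. The naive quotient bound (torus count divided by the maximal fiber size) only yields an exponent of order $m^2\bigl(\sigma(s,t)-\tfrac12\sigma(1/3,1/3)\bigr)$, whereas you need an exponent of order $\tfrac12 m^2\sigma(s,t)$; these agree only at the maximal-entropy slope, and for slopes with small $\sigma(s,t)$ the naive bound is vacuous. (A repair is possible --- note the fibers consist of plane-like completions and bound the entropy of plane-like tilings of the complement by $\sigma(s,t)+o(1)$ per unit area via a subdivision argument --- but that is a substantive step your proposal does not contain.) The upper bound is in worse shape: extending each tiling of $T_m$ that agrees with $\bar h$ into the larger triangle $T_{m(1+3\e)}$ bounds a triangle count by another triangle count, which is circular, while extending it into a torus of size $m(1+3\e)$ gives $\numtilings(\bar h)\leq e^{\sigma(s,t)m^2(1+O(\e))}$, roughly \emph{twice} the target exponent, because once again the complement of the triangle inside the torus contributes entropy that nothing in your argument cancels.

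The paper resolves precisely this area mismatch with two tricks, both absent from your write-up. For the upper bound it reflects: a tiling of $T_m$ together with the mirror image of a (possibly different) tiling of $T_m$ tiles the diamond $D_m$, so $\numtilings(\bar h)^2 \leq \sum_{\bar g} \numtilings(\bar g)$ with the sum over plane-like diamond boundaries, and Lemma~\ref{entropy_fundamental_domain} applies with areas exactly matched (reflection permutes the coordinates of the slope, but $\sigma$ is symmetric in $(s,t,1-s-t)$, so the constant is unaffected). For the lower bound it packs a much larger triangle with many size-$m$ diamonds, each carrying the periodic plane-like boundary condition of slope $(s,t)$; the tilings of distinct diamonds multiply, so the torus count transfers at the correct entropy density. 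Your closing remark sets the doubling idea aside as ``less self-contained,'' but the doubling (together with the packing construction) is what makes the proof work; the window argument you prefer is the one with the gap.
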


\begin{proof}
	Let $T_m$ be a triangle of size $m$ and $\bar{h}$ be a
        boundary height function which stays within $\e m$ of the
        plane with slope $(s,t)$. For each $h \in \bar{h}$, if we
        reflect $h$ along one side we obtain a height function of a
        lozenge $D_m$ which also stays within  $\e m$ of the plane
        with slope $(s,t)$. Hence we can bound $N(\bar{h})^2$ by the number of ways to extend a boundary in $\overline{H}^{\e}_{D_m}(s,t)$ and we obtain:
$$
\frac{2}{m^2}\log N(\bar{h}) \, = \, \frac{1}{m^2}\log\left(N(\bar{h})^2\right) \, \leq \,
\frac{1}{m^2}\.\log \left[\sum_{\bar{h}  \in\overline{H}^{\e}_{D_m}(s,t)} N(\bar{h})\right]
\. \leq \. \sigma(s,t)\. + \. O\bigl(\e \log(1/\e)\bigr).
$$
Now consider a triangle $T_{m^2}$ of size $m^2$, $\bar{h}$ be a boundary
height function which stays within $\e m$ of the plane with slope $(s,t)$.
We can fill partially $T_{m^2}$ with $m-o(1)$ lozenges of size $m$ each
having the same periodic boundary height function with slope $(s,t)$.
Using a similar argument as the one in the previous lemma for attaching
configurations, we can attach $\bar{h}$ to the height function on those
lozenges and we obtain:
$$
\sigma(s,t)\. +\. O\bigl(\e \log (1/\e)\bigr) \,\leq \, \frac{1}{2m^2}\log N(\bar{h}),
$$
as desired. \end{proof}

\bigskip

\subsection{Tilings of similar plane-like regions (weighted)} \label{subsec:weighted}
For the remainder of this proof we will be working with triangles
since later in this proof we will need to approximate surfaces with
piecewise-linear functions. Such approximations are done in a standard way using
triangles (see for example Lemma 2.2 in \cite{CKP01}).

Since the weight of each individual lozenge tile depends on its position in the lattice,
we now evaluate the weight contribution of a large triangle as a
function of its position.

\begin{lemma} \label{lemma:weighted}
	Let $x=(x_1,x_2) \in \R^2$ and $\ell \in \R$ be such that
        $\rho$ is smooth on $B(x,\ell)$. Let  $T(x,\ell n)$ be the
        triangle of size $ \ell n$ centered at the point
        $x^n:=(\lfloor n x_1 \rfloor,\lfloor n x_2 \rfloor)$ and  let
        $\bar{h} \in \overline{H}^{\e}_{T(x, \ell n)}(s,t)$. For a converging sequence of weights $\{w_{n} \}_{n \in \mathbb{N}}$ we have :
	\begin{align}
	\lim_{n\to \infty} \frac{4}{\sqrt{3}(\ell n)^2} \. \log
	Z(H_{\bar{h}},w_{\ell}) \, &  = \, \lim_{n\to
		\infty}\frac{4}{\sqrt{3}(\ell n)^2}\. \log \left[\sum_{\bar{h} \in
		\overline{H}^{\e}_{T(x,\ell n)}(s,t)} \. Z\bigl(H_{\bar{h}},w_{\ell}\bigr)\right] \notag \\
	&= \, \sigma(s,t)\. + \. L(x_1,x_2,s,t)\. + \. O\bigl(\e \log (1/\e)\bigr)\. + \. O(\ell),  \label{eq1:lemmaweighted}
	\end{align}
	where $Z(H_{\bar{h}},w_{\ell})$ is the total weight of all configurations
	with boundary $\bar{h}$.
\end{lemma}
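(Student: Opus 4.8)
The plan is to reduce the weighted estimate to the unweighted Corollary~\ref{cor:mainlemmaTriangles} by exploiting that, on a triangle this small, the weight of a tiling is essentially determined by its boundary alone. The key observation is that for a \emph{fixed} boundary height function $\bar h$, every extension $h \in H_{\bar h}$ tiles the same simply connected region, so by Proposition~\ref{obs:lozenge2triangles} the numbers $\numlozenge^{(i)}(\bar h)$ of lozenges of each type are the same for all such $h$ (indeed, this is already implicit in the notation). I would therefore first rewrite, for any $h \in H_{\bar h}$,
\[
\log \weight(h) \, = \, \sum_{i=1}^{3} \. \sum_{\lozenge \text{ of type } i} w^{(i)}_n(x_\lozenge, y_\lozenge),
\]
and compare each summand to its value $\rho^{(i)}(x_1,x_2)$ at the center of the triangle.

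Next I would control the weight per lozenge. Since $T(x,\ell n)$ is centered at $(\lfloor n x_1\rfloor, \lfloor n x_2\rfloor)$ and has size $\ell n$, every lozenge center $(x_\lozenge, y_\lozenge)$ satisfies $(x_\lozenge/n, y_\lozenge/n) = x + O(\ell)$. Using the convergence property~\eqref{prop:defnweights} of $\{w_n\}$ together with the smoothness (hence Lipschitz continuity) of $\rho$ on $B(x,\ell)$, one gets
\[
w^{(i)}_n(x_\lozenge, y_\lozenge) \, = \, \rho^{(i)}(x_1,x_2) \. + \. O(\ell) \. + \. o(1),
\]
uniformly over all lozenges in the triangle, where the $o(1)$ is the uniform weight-convergence error and the $O(\ell)$ bounds the variation of $\rho$ across $B(x,\ell)$. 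Summing over the $O((\ell n)^2)$ lozenges and using that the counts $\numlozenge^{(i)}(\bar h)$ are fixed, this yields
\[
\log \weight(h) \, = \, \sum_{i=1}^3 \rho^{(i)}(x_1,x_2)\. \numlozenge^{(i)}(\bar h) \. + \. \bigl(O(\ell) + o(1)\bigr)(\ell n)^2,
\]
with an error that is \emph{uniform} in $h \in H_{\bar h}$.

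Finally I would assemble the two pieces. Because $\log \weight(h)$ has the same leading behavior for every $h \in H_{\bar h}$, the sum $Z(H_{\bar h}, w_n) = \sum_{h} \weight(h)$ is squeezed between $N(\bar h)$ times the minimal and maximal weight, which share the same exponential rate. Dividing by $\tfrac{\sqrt3}{4}(\ell n)^2$ (the area of the triangle) and taking logs,
\[
\frac{4}{\sqrt3(\ell n)^2}\log Z(H_{\bar h}, w_n) \, = \, \frac{4}{\sqrt3(\ell n)^2}\log N(\bar h) \. + \. \frac{4}{\sqrt3(\ell n)^2}\sum_{i=1}^3 \rho^{(i)}(x_1,x_2)\numlozenge^{(i)}(\bar h) \. + \. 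O(\ell) + o(1).
\]
The first term converges to $\sigma(s,t) + O(\e\log(1/\e))$ by~\eqref{eq:lem1item1triangles}, while by~\eqref{eq:lem1item2triangles} the second term converges to $\rho(x_1,x_2)\cdot(s,t,1-s-t) = L(x_1,x_2,s,t)$ up to $O(\e)$; this gives the claimed limit for a single $\bar h$. For the summed version, I would note that there are at most $3^{O(\ell n)} = e^{o((\ell n)^2)}$ admissible boundary functions in $\overline{H}^{\e}_{T(x,\ell n)}(s,t)$, so passing from $N(\bar h)$ and $Z(H_{\bar h}, w_n)$ to their sums over $\bar h$ changes the normalized logarithm by only $o(1)$, exactly as in Corollary~\ref{cor:mainlemmaTriangles}.

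The main obstacle is ensuring the weight is essentially \emph{constant across all tilings} sharing a boundary, rather than merely across typical ones: this is what makes the weighted sum collapse to (number of tilings)$\times$(common weight), and it is precisely supplied by the exact equality of the lozenge counts $\numlozenge^{(i)}(\bar h)$ for fixed $\bar h$ (Proposition~\ref{obs:lozenge2triangles}). The remaining care is bookkeeping the three independent error sources --- the $O(\ell)$ variation of $\rho$, the $o(1)$ weight-convergence, and the $O(\e)$ slack in the lozenge-count estimate --- and checking each stays subdominant after normalization by the triangle area $\tfrac{\sqrt3}{4}(\ell n)^2$.
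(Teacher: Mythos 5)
Your proposal is correct and follows essentially the same route as the paper: approximate each lozenge weight by $\rho^{(i)}(x_1,x_2)$ up to $O(\ell)+o(1)$ via weight convergence and smoothness, use the (common) lozenge counts and their asymptotics from Corollary~\ref{cor:mainlemmaTriangles} to factor $Z(H_{\bar h},w_n)$ as $N(\bar h)$ times a common exponential rate $L(x_1,x_2,s,t)$, and absorb the sum over boundaries using the $e^{o((\ell n)^2)}$ bound on their number. Your explicit appeal to Proposition~\ref{obs:lozenge2triangles} to make the weight rate exactly boundary-determined is a clean way of stating what the paper uses implicitly, but it is the same argument.
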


\begin{proof}
The sequence of weights $\{w_{n} \}_{n \in \mathbb{N}}$
is convergent, by Condition~(ii) of Definition~\ref{def:convweight}.  Thus,
for all \ts $ny \in T(x,\ell n)$, and for each type of lozenge tile $i \in \{1,2.3\}$,
we have:
\[
|w^i_n(ny)-\rho^i(x)| \leq |w^i_n(ny)-\rho^i(y)|+|\rho^i(y)-\rho^i(x)| =o(1)+O(\ell).
\]
Here we used the smoothness of $\rho$ on $B(x,\ell)$ to bound $|\rho^i(y)-\rho^i(x)| = O(\ell)$.
	This means that for all height function $h \in
	H^{\e}_{T(x,l)}(s,t)$ with boundary $\bar{h}$, we must have:
\begin{align*}
	\weight(h) & =  \prod_{\lozenge \in h} e^{w^{i_{\lozenge}}(x_{\lozenge})} =  \prod_{\lozenge \in h} e^{(\rho^{i_\lozenge}(x)+O(\ell)+o(1))}\\
	& =  \, \sum_{j=1}^3 \. \exp\left[\frac{\sqrt{3}(\ell n)^2}{4}\bigl(x_j\. + \. o(1)\. + \. O(\e)\bigr)\right] \cdot
\exp\biggl[\rho^{1}(x)\. + \. o(1)\. + \. O(\ell)\biggr] \\
	& = \, \exp\left[\frac{\sqrt{3}(\ell n)^2}{4}\biggl(L(x_1,x_2,s,t)\. + \. o(1)\. + \. O(\ell)\. + \.  O(\e)\biggr)\right], \label{eq:oneheight}
\end{align*}
	where $x_3 = 1-x_1-x_2$. Then the contribution
	of all configurations with boundary $\bar{h}$ is
	given by:
	\begin{align*}
	Z(H_{\bar{h}},w_{n}) & \, = \, \sum_{h \in H^{\e}_{T(x,\ell)}(s,t)} \weight(h) \\
	& = \, \exp\left[\frac{\sqrt{3}(\ell \ts n)^2}{4}\biggl(L(x_1,x_2,s,t)\. + \. o(1) \. + \. O(\ell) \. + \. O(\e )\biggr)\right] \. N(\bar{h}).
	\end{align*}
Applying Corollary~\ref{cor:mainlemmaTriangles} to the equation above, we obtain:
	\begin{align*}
	Z(H_{\bar{h}},w_{n}) & \, = \, \exp\left[\frac{\sqrt{3}(\ell \ts n)^2}{4}\biggl(L(x_1,x_2,s,t) \. + \. o(1) \. + \. O(\e)\biggr)\right] \, \times \notag \\
&
\hskip2.cm \times \, \exp\left[\frac{\sqrt{3}(\ell \ts n)^2}{4}\biggl(\sigma(s,t) \. + \. o(1) \. + \. O\bigl(\e
  \log (1/\e)\bigr)\biggr)\right]  \notag \\
	&  \, =  \,  \exp\left[\frac{\sqrt{3}(\ell\ts n)^2}{4}\biggl(\sigma(s,t) \. + \. L(x_1,x_2,s,t) \. + \. o(1) \. + \. O(\ell) \. + \. O\bigl(\e\log(1/\e)\bigr)\biggr)\right].
	\end{align*}
	Then \eqref{eq1:lemmaweighted} follows by taking the logarithm
        of the equation above. Since the number of boundary height
        functions for a given triangle is bounded by $3^{3\ell
          n}=e^{o(\ell^2 n^2)}$, we also obtain:
\[
	\lim_{\ell\to \infty}\frac{4}{\sqrt{3}(\ell n)^2}
	\sum_{\bar{h} \in
		\overline{H}^{\e}_{T(x,\ell)}(s,t)} Z(H_{\bar{h}},w_{\ell}) \, = \,
    \sigma(s,t)\. +\. L(x_1,x_2,s,t)\. +\. O\bigl(\e \log (1/\e)\bigr)\. +\. O(\ell).
\]
This finishes the proof.
\end{proof}

\medskip

\subsection{Proof of Theorem~\ref{thm:main}}

We now prove the weighted variational principle. At this stage our
strategy is exactly the same as Theorem 4.3 in \cite{CKP01} or
Theorem 2.9 in \cite{MT}. We recall the following two lemmas from   \cite{CKP01} which will be useful in our proof.

\begin{lemma}[\cite{CKP01} Lemma 2.2] \label{pf:linear}
	For $\ell > 0$, consider a mesh made up of equilateral triangles of side length
	$\ell$ (which we call an $\ell$-mesh). Let $f \in \Lip_{[0,1]}$  be such that
    $f= \gamma$ on $U$, and let $\e > 0$. If $\ell$ is sufficiently small then
    on at least $(1-\e)$ fractions of the triangles in the $\ell$-mesh that
    intersect $U$ we have the following two properties:
	\begin{enumerate}
		\item The piecewise linear approximation $\wt{f}$ stays within $\ell \e$ of $f$.
		\item For at least a $(1-\e)$  fraction (in measure) of the points $x$ of the triangle,
        the tilt $\nabla f(x)$ exists and is within~$\e$  of $\wt{f}(x)$.
	\end{enumerate}
\end{lemma}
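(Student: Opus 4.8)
The plan is to take $\wt f$ to be the piecewise-linear interpolant of $f$ at the vertices of the $\ell$-mesh: set $\wt f(v) = f(v)$ at every mesh vertex $v$ and extend affinely on each triangle, so that $\nabla\wt f$ is constant on the interior of each triangle $T$; denote that constant slope by $s_T$. The two assertions then concern how well this affine interpolant reproduces the \emph{values} of $f$ (part~(1)) and how well its piecewise-constant slope $s_T$ reproduces the \emph{tilt} $\nabla f$ (part~(2); here the paper's $\wt f(x)$ should be read as the constant slope $s_T$ of the affine piece containing $x$). The whole argument rests on Rademacher's theorem: since $f$ is $1$-Lipschitz, $\nabla f$ exists almost everywhere on $U$, is bounded by $1$, and is measurable. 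The two workhorses will be Egorov's theorem (to make first-order differentiability uniform off a small set) and Lusin's theorem (to make $\nabla f$ continuous off a small set).

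For part~(1), I would quantify differentiability. For $x_0$ a point of differentiability of $f$, the tangent plane $a_{x_0}(y) := f(x_0) + \nabla f(x_0)\cdot(y-x_0)$ satisfies $|f(y) - a_{x_0}(y)| = o(|y - x_0|)$; setting $r(x_0,\ell) := \ell^{-1}\sup_{|y-x_0|\le \ell}|f(y)-a_{x_0}(y)|$, we have $r(x_0,\ell)\to 0$ as $\ell\to 0$ for a.e.\ $x_0$. By Egorov, for any $\delta>0$ there is a set $G\ssu U$ with $|U\setminus G|<\delta|U|$ on which this convergence is uniform, so that $r(x_0,\ell)<\e'$ for all $x_0\in G$ once $\ell$ is small. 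On a triangle $T\ni x_0$ with $x_0\in G$ we then have $\|f-a_{x_0}\|_{L^\infty(T)}\le \e'\ell$; since $\wt f$ interpolates $f$ at the three vertices and $a_{x_0}$ is affine, the affine function $\wt f - a_{x_0}$ is bounded by $\e'\ell$ at the three non-collinear vertices and hence by $O(\e'\ell)$ throughout $T$, giving $\|\wt f - f\|_{L^\infty(T)}\le \e\ell$. The triangles meeting the bad set $U\setminus G$ number a fraction $O(\delta)$ of those intersecting $U$, which yields part~(1).

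For part~(2), I would leverage the same comparison. On a triangle $T$ meeting $G$ at $x_0$, the two affine functions $\wt f$ and $a_{x_0}$ agree to within $\e'\ell$ at the three vertices of $T$; since those vertices are spread at scale $\ell$, their slopes differ by $O(\e')$, so $|s_T - \nabla f(x_0)| = O(\e')$ (this is where the $1/\ell$ loss in passing from values to slopes is exactly compensated by the $\e'\ell$ value bound). It remains to show $\nabla f(x)$ is close to $\nabla f(x_0)$ for most $x\in T$, and for this I apply Lusin's theorem to the bounded measurable map $\nabla f$: there is a compact $K\ssu U$ with $|U\setminus K|<\delta|U|$ on which $\nabla f$ is uniformly continuous with modulus $\omega$. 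Choosing $\ell$ so small that $\omega(2\ell)<\e$ and $\e'$ so small that $O(\e')<\e$, on any triangle meeting both $G$ and $K$ we get $|\nabla f(x)-s_T|\le |\nabla f(x)-\nabla f(x_0)|+|\nabla f(x_0)-s_T|\le \e$ for every $x\in T\cap K$; and $T\cap K$ is a $(1-\e)$-fraction of $T$ for all but a fraction $O(\delta)$ of triangles, by a Chebyshev count applied to the $L^1$ bound on the indicator of $U\setminus K$.

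The hard part will be the order of quantifiers and the resulting counting, rather than any single estimate: one must first fix $\delta$ and $\e'$, produce the Egorov set $G$ and the Lusin set $K$ for those parameters, and only \emph{afterwards} choose $\ell$ small, since both the uniform differentiability bound and the modulus-of-continuity bound are statements that kick in only for $\ell$ below a threshold depending on $G$, $K$. The delicate bookkeeping is the nested ``$(1-\e)$ of the triangles, and $(1-\e)$ of the points in each'' structure, which I would obtain by two applications of a Markov/Chebyshev inequality to the measures of $U\setminus G$ and $U\setminus K$. Finally, since this lemma is quoted verbatim from~\cite[Lemma 2.2]{CKP01}, in the write-up one may simply invoke that reference, the equilateral-triangle $\ell$-mesh being handled identically to the square mesh treated there.
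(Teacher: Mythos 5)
The paper does not prove this lemma at all: it is imported verbatim from \cite{CKP01} (Lemma~2.2 there), and the authors' entire ``proof'' is the citation, so there is no internal argument to compare yours against. Your reconstruction is essentially the standard proof of that result: piecewise-linear interpolation of $f$ at the mesh vertices, Rademacher's theorem for a.e.\ differentiability, Egorov to make the first-order Taylor estimate $r(x_0,\ell)\to 0$ uniform off a set of measure $\delta|U|$, Lusin to get a uniform modulus of continuity for $\nabla f$ off another small set, and Markov/Chebyshev counts to convert the exceptional-measure bounds into the ``$(1-\e)$ fraction of triangles'' statement; the passage from the sup-norm bound $\e'\ell$ at three non-collinear vertices to an $O(\e')$ bound on the slope difference is exactly the right way to recover part~(2) from part~(1), and your reading of the statement's typo --- that ``within $\e$ of $\wt{f}(x)$'' must mean within $\e$ of the constant tilt $\nabla \wt{f}$ on the triangle containing $x$ --- is correct. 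One small slip: in part~(2) you require the triangle to meet ``both $G$ and $K$'' and then compare $\nabla f(x)$, for $x\in T\cap K$, with $\nabla f(x_0)$ via uniform continuity on $K$; for this, $x_0$ must itself lie in $K$ as well as in $G$, so you should demand that $T$ meet the intersection $G\cap K$ (of measure at least $(1-2\delta)|U|$), not the two sets separately. With that adjustment, and the boundary-triangle bookkeeping you already flag (triangles straddling $\partial U$ form an $O(\ell)$ fraction), your argument is sound, and it supplies a self-contained proof where the paper, reasonably, only points to \cite{CKP01}.
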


\begin{lemma}[\cite{CKP01} Lemma 2.3] \label{pf:linearlem2fromCKP}
	Suppose that $T$ is an equilateral triangle of length $\ell$, and the
	height function $f$ satisfies $|f_{\delta T} - \wt{f}| \leq \e
	\ell$ on $\delta T$, where $\wt{f}$ is the piecewise linear
	approximation from Lemma~\ref{pf:linear}, then
	\[
	\Psi(f) \. = \. \Psi(\wt{f}) \. + \. o\bigl(\text{\rm area}(T)\bigr).
	\]
\end{lemma}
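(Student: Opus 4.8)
The plan is to compare the integrands of $\Psi$ (the integral \eqref{eq:Ww}, here understood as taken over the single triangle $T$) on $f$ and on $\wt f$, treating the two contributions separately: the weight term $L(x,\nabla f)$, which is \emph{linear} in $\nabla f$, and the entropy term $\sigma(\nabla f)$, which is the genuinely nonlinear piece. Throughout, recall that $\wt f$ is affine on $T$, so $\nabla \wt f$ is a constant vector $\bar p$ and $\iint_T \sigma(\nabla \wt f)=\area(T)\,\sigma(\bar p)$. I would first reduce both comparisons to control of the average gradient $\frac{1}{\area(T)}\iint_T \nabla f$: by Green's theorem this average is a boundary integral of $f$ over $\partial T$, and since $f$ agrees with $\wt f$ to within $\e\ell$ on $\partial T$ while $\partial T$ has length $O(\ell)$, the average gradient of $f$ lies within $O(\e)$ of $\bar p$.

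For the weight term the key point is that $L(x,p)=\rho(x)\cdot(p_1,p_2,1-p_1-p_2)$ is affine in $p$. Since $\rho$ is smooth on $T$, I would replace $\rho(x)$ by its value $\rho(x_0)$ at the center at the cost of $O(\ell)$ uniformly; because the gradients are bounded under the $\Lip_{[0,1]}$ constraint, this contributes $O(\ell\,\area(T))$ to the integral. The remaining integrand is linear in $\nabla f$, so $\iint_T L(x_0,\nabla f)$ depends on $f$ only through $\iint_T \nabla f$, which by the previous paragraph matches the corresponding quantity for $\wt f$ up to $O(\e\,\area(T))$. Hence $\iint_T L(x,\nabla f)=\iint_T L(x,\nabla \wt f)+O\big((\e+\ell)\area(T)\big)$.

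The nonlinear term $\sigma(\nabla f)$ is the main obstacle, since $\sigma$ is strictly concave and, worse, fails to be Lipschitz up to the boundary of the slope simplex $\{s,t\ge 0,\ s+t\le 1\}$: its gradient blows up logarithmically there because the derivative of the Lobachevsky function equals $-\log|2\sin\theta|$, which is unbounded as $\theta\to 0^+$. For the upper bound I would use concavity and Jensen's inequality, $\iint_T \sigma(\nabla f)\le \area(T)\,\sigma\big(\tfrac{1}{\area(T)}\iint_T \nabla f\big)$, combined with the average-gradient estimate and the modulus of continuity of $\sigma$, which near the simplex boundary is only $O(\e\log(1/\e))$; this yields $\iint_T \sigma(\nabla f)\le \iint_T \sigma(\nabla \wt f)+o(\area(T))$. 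Jensen gives nothing in the reverse direction, so for the matching lower bound I would use that $\wt f$ is \emph{the} piecewise linear approximation of Lemma~\ref{pf:linear}, invoking its property~(2): on a $(1-\e)$ fraction of $T$ the tilt $\nabla f$ exists and lies within $\e$ of $\bar p=\nabla \wt f$, so there $\sigma(\nabla f)=\sigma(\bar p)+o(1)$, while on the exceptional $\e$-fraction $\sigma$ is merely bounded and contributes $O(\e\,\area(T))$. Combining the two bounds gives $\iint_T \sigma(\nabla f)=\iint_T \sigma(\nabla \wt f)+o(\area(T))$; adding the weight-term estimate and recalling that $\wt f$ is affine on $T$ gives $\Psi(f)=\Psi(\wt f)+o(\area(T))$, the error collecting the $O(\ell)$, $O(\e)$ and $O(\e\log(1/\e))$ terms, all $o(\area(T))$ as $\ell,\e\to 0$. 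The delicate point is precisely the non-Lipschitz behaviour of $\sigma$ at the boundary of the slope simplex, which forces the weaker $\e\log(1/\e)$ modulus and is the same source of the $O(\e\log(1/\e))$ error already appearing in Lemma~\ref{entropy_fundamental_domain}.
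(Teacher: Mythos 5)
Your proposal is correct, but it follows a genuinely different route from the paper, because the paper does not prove this lemma at all: it is imported verbatim as Lemma~2.3 of \cite{CKP01} (stated there for the unweighted functional $\Phi$, i.e.\ the $\sigma$-part only), and the paper's entire justification for the weighted version is the one-line remark that $L(x_1,x_2,\nabla f)$ in \eqref{eq:Ww} is linear in $\nabla f$, so the conclusion transfers from $\Phi$ to $\Psi$. What you have done is reconstruct both halves from scratch. Your treatment of the entropy term is essentially the \cite{CKP01} argument: Jensen's inequality for the concave $\sigma$ against the average gradient, which Green's theorem pins to within $O(\e)$ of $\nabla\wt f$ using only the boundary hypothesis, together with property~(2) of Lemma~\ref{pf:linear} for the matching lower bound and the $O(\e\log(1/\e))$ modulus of continuity of $\sigma$ near the edge of the slope simplex --- the same error source as in Lemma~\ref{entropy_fundamental_domain}. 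Your treatment of the weight term is a rigorous version of the paper's linearity remark: freezing $\rho$ at the center of $T$ costs $O(\ell\,\area(T))$, after which the integral depends on $f$ only through $\iint_T\nabla f$, which is boundary-determined up to $O(\e\,\area(T))$. Two points you make explicit that the citation-plus-remark glosses over, and which are worth recording: first, the boundary hypothesis $|f-\wt f|\le\e\ell$ on $\partial T$ alone cannot give the lower bound for the $\sigma$-term (a height function oscillating among the three frozen slopes can match $\wt f$ on $\partial T$ while $\iint_T\sigma(\nabla f)\approx 0$), so invoking property~(2) of Lemma~\ref{pf:linear} is not optional but is the correct reading of the hypothesis that $\wt f$ is \emph{the} piecewise linear approximation from that lemma; second, $\sigma$ must be taken strictly \emph{concave} for Jensen to run in the direction you use (and for uniqueness of $f_{\max}$) --- you have this right, whereas the paper's Section~\ref{sec:proofwtvarprinciple} calls $\sigma$ ``strictly convex'', a sign slip. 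In short: the paper's approach buys brevity by deferring to a known result, while yours buys a self-contained argument that exposes exactly which hypotheses carry the load.
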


\begin{remark}  {\rm
	Note that Lemma~2.3 in \cite{CKP01} is stated for $\Ups(\cdot)$,
	however since  the function $L(\cdots)$ from the definition of
        $\Psi(\cdot)$ in \eqref{eq:Ww} is
	linear, this lemma still holds for $\Psi(\cdot)$.
}\end{remark}

Next, we  approximate the partition function of all height
functions which stays close to a given function $f$ using
$\Psi$ and show that the error term goes to zero.

\begin{lemma}
	Let $f \in \Lip_{[0,1]}$  be such that $f= \gamma$ on $U$ and let $\delta > 0$.  If we denote by $
	\Zptn(H^{\delta}_{f}, w_n)$ the total weight of height functions
	which stay within $\delta n$  of $f$, then:
	\begin{equation} \label{eq:pflb}
	\lim_{n \to \infty} \frac{1}{n^2}\log \Zptn(H_{f}^{\delta}, w_n) \, =\, \Psi(f) \. + \. o(1).
	\end{equation}
\end{lemma}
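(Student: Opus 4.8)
The plan is to adapt the triangulation argument of \cite{CKP01} to the weighted setting, with Lemma~\ref{lemma:weighted} serving as the local building block on each triangle. Fix a small $\ell>0$ and overlay an $\ell$-mesh of equilateral triangles on the region $U$. By Lemma~\ref{pf:linear}, on all but an $\e$-fraction of the triangles meeting $U$ the piecewise-linear interpolant $\wt f$ agrees with $f$ to within $\ell\e$ and its constant tilt $(s_T,t_T)=\nabla\wt f$ is within $\e$ of $\nabla f$ on most of the triangle. After scaling by $n$, each such triangle becomes a copy $T(x_T,\ell n)$ to which Lemma~\ref{lemma:weighted} applies, with plane slope $(s_T,t_T)$.

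For the upper bound, I would exploit that $\weight(h)$ factorizes as a product over lozenges, so fixing the height values of $h$ along the mesh edges makes the total weight split into a product over triangles, up to the $o(n^2)$ contribution from lozenges straddling an edge. Any $h\in H^{\delta}_{f}$ restricts on each triangle to a boundary condition lying in $\overline{H}^{\e'}_{T(x_T,\ell n)}(s_T,t_T)$ with $\e'$ comparable to $\delta/\ell+\e$; summing over all admissible mesh boundary conditions and applying Lemma~\ref{lemma:weighted} triangle by triangle gives
\[
\frac{1}{n^2}\log\Zptn(H^{\delta}_{f},w_n)\;\le\;\sum_T\frac{\sqrt3\,\ell^2}{4}\bigl(\sigma(s_T,t_T)+L(x_T,s_T,t_T)\bigr)+O\bigl(\e\log(1/\e)\bigr)+O(\ell)+o(1).
\]
The sum on the right is a Riemann sum for $\iint_U\bigl(\sigma(\nabla\wt f)+L(x_1,x_2,\nabla\wt f)\bigr)\,dx_1\,dx_2=\Psi(\wt f)$, which by Lemma~\ref{pf:linearlem2fromCKP} equals $\Psi(f)+o(1)$ as $\ell\to0$. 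Sending $n\to\infty$, then $\e\to0$ and $\ell\to0$, yields the $\le$ direction.

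For the matching lower bound, rather than restricting general height functions I would build many configurations close to $f$ by gluing. On each triangle I use the periodic, plane-like tilings of slope $(s_T,t_T)$ whose count and weight are bounded below in Lemma~\ref{lemma:weighted} (via Corollary~\ref{cor:mainlemmaTriangles}); these stay within $\e\ell n$ of the linear plane, hence within $\delta n$ of $f$ once $\ell$ and $\e$ are small relative to $\delta$. The Lipschitz extension Lemma~\ref{lem:extension}, used exactly as in the proof of Lemma~\ref{entropy_fundamental_domain}, interpolates the chosen tilings across mesh boundaries without spoiling the near-linear profile, so the product of the per-triangle weights furnishes configurations in $H^{\delta}_{f}$. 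Their total exponential contribution again sums to the same Riemann sum, giving $\frac{1}{n^2}\log\Zptn(H^{\delta}_{f},w_n)\ge\Psi(\wt f)-O\bigl(\e\log(1/\e)\bigr)-O(\ell)-o(1)$, and the two bounds meet in the limit.

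The main obstacle is the three-parameter bookkeeping. I must reconcile the global window ``within $\delta n$ of $f$'' with the per-triangle windows ``within $\e m$ of the plane'' demanded by Lemma~\ref{lemma:weighted}: this forces $\e$ to be chosen small relative to $\delta/\ell$ while still permitting $\ell\to0$, so that the restricted boundary conditions genuinely fall in the classes $\overline{H}^{\e'}_{T(x_T,\ell n)}(s,t)$ that lemma controls, and so that the errors $O(\e\log(1/\e))$ and $O(\ell)$ vanish in the appropriate order, leaving only the $\delta$-dependent $o(1)$. The other delicate point, parallel to the unweighted case in \cite{CKP01}, is justifying that the weighted partition function factorizes across mesh edges up to $o(n^2)$ in the exponent, i.e.\ that lozenges straddling triangle boundaries contribute negligibly.
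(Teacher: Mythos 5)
Your proposal is correct and follows essentially the same route as the paper's proof: an $\ell$-mesh triangulation, the piecewise-linear approximation lemmas (Lemmas~\ref{pf:linear} and~\ref{pf:linearlem2fromCKP}), a per-triangle application of Lemma~\ref{lemma:weighted}, and a sandwich between a glued lower bound and a factorized upper bound, with the same $\delta \ll \ell\e$ bookkeeping. The only cosmetic differences are that the paper constructs its lower-bound boundary data by truncating an arbitrary height function (via $\min/\max$ with $\lfloor nf \pm \delta\rfloor$) rather than gluing periodic tilings through Lemma~\ref{lem:extension}, and its upper bound uses the free product over triangles of functions within $\delta$ of $\wt{f}$, which absorbs your edge-straddling concern since the mesh edges carry only $O(n/\ell) = o(n^2)$ lattice sites.
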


\begin{proof}
Let $\e < 1$ and consider a sequence of grids $\{G_n\}_{n \in \N}$
which partition  the triangular lattice into equilateral
triangles of size $\ell n$.  Denote by $\wt{G}_n$ the
$\ell$-mesh obtained by rescaling $G_n$ and $\wt{f}$
the linear approximation of $f$ on $\wt{G}_n$. See
Figure~\ref{fig:pfvarprin}.
	
We start by approximating $\Psi(f)$ by the terms on the RHS of~\eqref{eq1:lemmaweighted}
of Lemma~\ref{lemma:weighted}. According to Lemma \ref{pf:linear}, for $\ell$ small enough we have:
	\[
	\sup_{x \in T}|f(x)-\wt{f}(x)| \leq \ell\e
	\]
on all but a portion at most $\e$ of the triangles in $U$. Next, we rewrite $\Psi(\wt{f})$ as
\begin{equation} \label{eq:Psialpha2sumPsialpha}
	\Psi(\wt{f}) \, = \, \sum_{T \in U} \frac{4\ell^2}{\sqrt{3}}   \left(
	\sigma(\nabla \wt{f})
	+L(x^{T}_1,x^{T}_2,\wt{f}) \right) \. + \. o\bigl(\area(U)\bigr),
\end{equation}
where the error term $o\bigl(\area(U)\bigr)$ comes from bounding the following integral
	\[
	\sum_{T\in U} \. \iint_T \left(\rho(x_1,x_2) - \rho(x_1^T,x_2^T)\right)\cdot (\partial_{x_1} f, \partial_{x_2}
	f, 1 - \partial_{x_1}f-\partial_{x_2}f) \. d x_1 \ts d x_2,
	\]
using the uniform continuity of $\rho$ on each component of $U$ where it is smooth.

Combining \eqref{eq:Psialpha2sumPsialpha} with Lemma~\ref{pf:linearlem2fromCKP} we have:
$$
\left|\Psi(f)-\sum_{T \in U} \frac{4\ell^2}{\sqrt{3}} \left(\sigma(\nabla \wt{f})
	\ts + \ts L(x^{T}_1,x^{T}_2,\wt{f}) \right)\right| \. = \. o(1).
$$
With this approximation, we are now ready to prove \eqref{eq:pflb}.
Choose  $\delta < \ell\e$ and $\{h_n\}$ to be any height function
with boundary~$\gamma_n$. Define
$$
a_n(x) \, := \, \min \bigl\{h_n(x),\lfloor nf(x/n)+\delta\rfloor\bigr\}
\quad \text{and} \quad g_n(x) \, := \, \max\bigl\{ a_n(x), \ts \lfloor nf(x/n)-\delta\rfloor\bigr\}\ts.
$$
Then \ts $|g_n/n-f| \leq \delta/2$  and $g_n \in  H^{\delta}_{f}$, by construction.
		
\begin{figure}
		\includegraphics[scale=0.57]{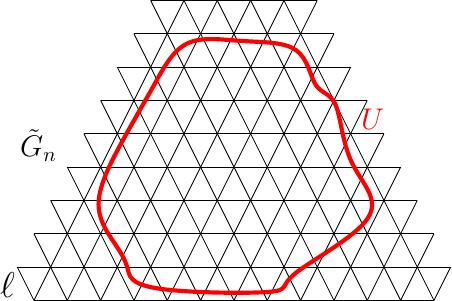}
		\caption{The grid $\wt{G}_n$ of equilateral triangles of size $\ell$
			that partitions $U$ used in the proof of Theorem~\ref{thm:main}.}
		\label{fig:pfvarprin}
\end{figure}
	
We can ignore  the contribution of triangles that are not fully included in $U$  which is $O(\delta)=O(\e)$.
The quantity  $\log \Zptn(H_{f}^{\delta}, w_n)$ is bounded from below by the weight
of all height functions that agree with $g_n$ on the boundary of all triangles in $G_n$ completely
contained in~$U$ after rescaling.  This gives:
\begin{align*}
	\log \prod_{T \in	U} 	Z(H_{\bar{g}_{\partial 	T}},w_n) \, = \, \sum_{T \in	U}
	\log  Z(H_{\bar{g}_{\partial T}},w_n) \, \leq \, \log Z(H_{f}^{\delta}, w_n),
\end{align*}
where the product is taken over all triangle $T$ fully contained in~$U$ and such that \[
\sup_{x \in T}|f(x)-\wt{f}(x)| \leq \frac{\delta}{2}.
\] Now include
the $O(\e)$ in the bound.  Then \ts $\log Z(H_{f}^{\delta}, w_n)$ \ts is bounded from above
by the total free product of all height functions which stays within $\delta$
of $\wt{f}$ on each one of those triangles. In other words,
$$
	\log
	Z(H_{f}^{\delta}, w_n) \, \leq \, \log \prod_{T \in	U} \.
	Z(H^{\delta}_{\wt{f}_{\partial T}},w_n) \, =  \, \sum_{T \in	U}
	\log  Z(H^{\delta}_{\wt{f}_{\partial T}},w_n) \. + \.O(\e).
$$
Using Lemma~\ref{lemma:weighted} to approximate each $\log Z(H_{\bar{g}_{\partial T}},w_n)$
and  $\log Z(H^{\delta}_{\wt{f}_{\partial T}},w_n) $ in the above inequalities, we obtain:
\begin{align*}
 &\frac{1}{n^2} \log Z(H^{\delta}_{f}, w_n) \, = 	\\
& \qquad = \, \frac{1}{n^2} \sum_{T \in U}
\left[\frac{\sqrt{3}}{4}\ell^2n^{2} \left(\sigma(\wt{f}) + L(x^{T}_1,x^{T}_2,\wt{f})
+ o( 1)+O(\ell) +O(\e \log 1 / \e)\right)\right]+o(1) \\
& \qquad = \, \sum_{T \in U} \left[\frac{\sqrt{3}}{4}\ell^2
\left( \sigma(\wt{f}) +L(x^{T}_1,x^{T}_2,\wt{f}) + o( 1)+O(\ell) \ts +\ts O(\e \log 1 / \e)\right)\right]+o(1)\\
& \qquad = \, \Psi(\wt{f})+O(\ell) \. + \. O(\e \log 1 / \e) \. + \. o(1).
\end{align*}
Since  both $\ell$ and $\e$ can be chosen as small as needed when
 $\delta\to 0$, we have:
$$ \frac{1}{n^2} \log Z(H^{\delta}_{f}, w_n) \, = 	\,
\Psi(\wt{f})\. + \. O(\ell) \. + \. O(\e \log 1 / \e) \. + \. o(1) \, = \, \Psi(f) \. + \. o(1),
$$
as desired.
\end{proof}

The function $\sigma$ is strictly convex and $L$ is linear,
thus the function $\sigma + L$ is itself strictly convex.
This implies that there exist a unique function $f_{\max}$
in $\Lip_{[0,1]}$ that maximize $\Psi$. By the previous lemma, we obtain:
$$
\lim_{n \to \infty} \. \frac{1}{n^2}\. \log \Zptn(H_{\gamma_n}, w_n) \, \geq\, \Psi(f_{\max}).
$$
Moreover, the set $\Lip_{[0,1]}(U,\gamma)$ of functions $f \in \Lip_{[0,1]}$ such that $f= \gamma$ on~$U$
is compact for the $\ell_{\infty}$ norm. Hence, for every fixed $\alpha > 0$,
there exist a finite covering $\bigcup_{i=1}^{k_{\alpha}}B_{\ell_{\infty}}(f_i,\delta_{i})$ of $\Lip_{[0,1]}(U,\gamma)$ such that for all $i \leq k_{\alpha}$:
\begin{equation}
\lim_{n \to \infty} \frac{1}{n^2}\log \Zptn(H_{f_i}^{\delta_{i}}, w_n) \, \leq \Psi(f_i) +\alpha .
\end{equation}
If we denote by $C(\delta, \alpha)$ the number of balls in this covering, this implies that for all $\delta, \alpha > 0$:
$$
\lim_{n \to \infty} \. \frac{1}{n^2}\. \log \Zptn(H_{\gamma_n}, w_n) \, \leq \, \Psi(f_{\max}) + \lim_{n \to \infty} \frac{1}{n^2}\. \log C(\delta, \alpha)\,+\alpha \, = \, \Psi(f_{\max})+ \alpha.
$$
Letting $\delta$ and $\alpha$ go to $0$ gives the desired result. This finishes the proof of Theorem~\ref{thm:main}.

\bigskip

\section{Final remarks and open problems}\label{s:finrem}

\subsection{}
There are other positive formulas for $f^{\lambda/\mu}$ using
the Littlewood--Richardson coefficients and the \emph{Okounkov--Olshanski
formula}, see \cite[\S 9]{MPP1} and \cite{MZ} for the discussion and references.
It would be interesting to see if the variational principle applies in
either case.

\subsection{} \label{ss:finrem-racah}
In case of the thick hooks (see~$\S$\ref{ss:intro-hooks}),
the variational principle result (Theorem~\ref{thm:main})
is already interesting and is now  well understood.
It corresponds to a degenerate case of more general
weights introduced in~\cite{BGR} and further studied
in~\cite{Betea,DK} (see also~\cite{MPP3}), where both
the frozen region and the probability density are computed.

It is worth comparing frozen regions in the uniform and weighed cases,
see Figure~\ref{f:lozenge-hexagon}.  The uniform frozen region is
famously a circle, while the weighted frozen region is an algebraic
curve with only mirror symmetry.  Let us mention that explicit product
formulas for \emph{$q$-Racah polynomials} allow a direct sampling from
these weighted tilings in this case, see~\cite[$\S$7.5]{Betea}
and~\cite[$\S$9]{BGR}.  This approach does not generalize to other
skew shapes.

\begin{figure}[hbt]
\includegraphics[scale=0.4]{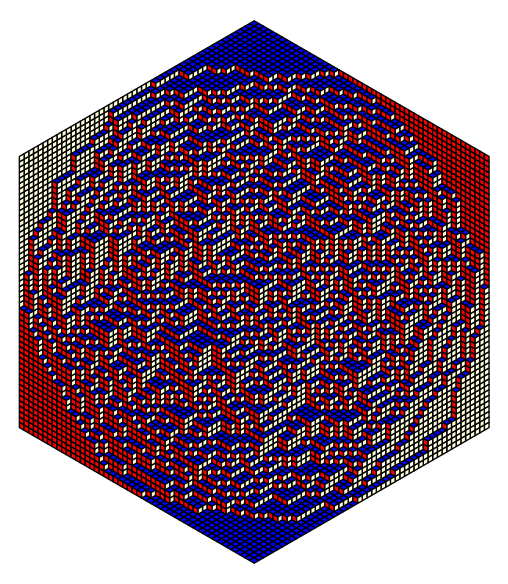} \hskip1.4cm \includegraphics[scale=0.4]{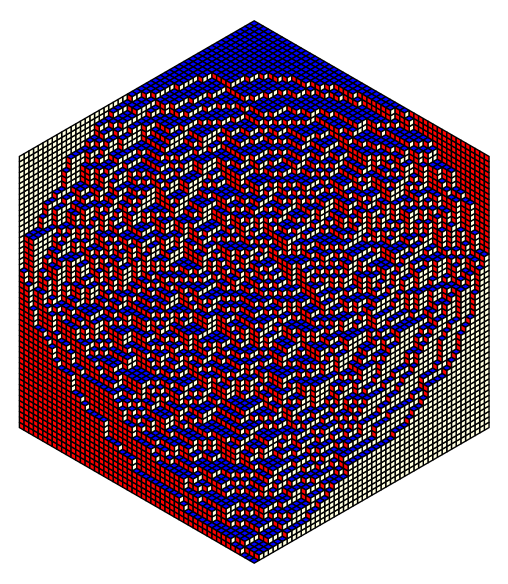}
\caption{Uniform and weighted random lozenge tilings of the hexagon \ts
  $\Hex(50,50,50)$ from \cite[Fig. 2]{MPP3}.}
\label{f:lozenge-hexagon}
\end{figure}

\subsection{}
It would be interesting to compute the frozen region explicitly
for the weighted lozenge tilings in some important special cases,
such as thick ribbons described in
$\S$\ref{ss:intro-ribbons}.  From the variational principle we
cannot even tell if these regions are bounded by algebraic curves.

\subsection{}
Beside stable limits shapes, there are other asymptotic regimes
when the problem of computing $f^{\la/\mu}$ is of interest,
see~\cite{DF,MPP4,Stanley_skewSYT}. Except for the case when
$|\mu|=O(1)$, obtaining better bounds is an interesting and difficult
challenge.

\subsection{} \label{ss:finrem-var}
In an important recent development, Sun showed the existence of limit curves
for random standard Young tableaux with stable limit shape~\cite{Sun},
also by modifying the variational principle and using the \emph{beads model} 
to encode standard tableaux. Soon after Gordenko \cite{Gor} wrote an explicit 
conjectural description of this limit shape using a modified TASEP on dimer 
configurations. This suggests that in principle one can hope to evaluate
the constant~$C$ of the for thick ribbons (see $\S$\ref{ss:intro-ribbons}).

\subsection{} \label{ss:finrem-non}
It would be interesting to see if the strategy sketched in~\cite[3.5]{Pak-ICM} 
can be used to conclude that there is no natural bijective proof of the Naruse 
hook-length formula~\eqref{eq:Naruse}.
Let us mention that~\cite{Kon} gives a bijective proof of a recurrence
involved in the proof of the NHLF. Unfortunately, there seem to be no
way to use this bijection for uniform sampling of random standard
Young tableaux of skew shape.

\vskip.7cm

\subsection*{Acknowledgements} The authors would like to thank
Nishant Chandgotia, Jehanne Dousse, Valentin F\'eray,
Anna Gordenko, Vadim Gorin, Rick Kenyon, Victor Kleptsyn,
Georg Menz, Jay Pantone, Leo Petrov, Istv\'an Prause,
Dan Romik, Richard Stanley
and Damir Yeliussizov for helpful discussions.  We are especially
thankful to Greta Panova for the ongoing extended collaboration
in the area; notably, the idea of this paper is stemming from
the lozenge tiling results in~\cite{MPP3}. We also thank the anonymous
referee for helpful comments and suggestions.
The first two authors were partially supported by the NSF.


\vskip.9cm

\bibliographystyle{alpha}

\begin{thebibliography}{CKP01}


\bibitem[AdR]{AdR}
R.~Adin and Y.~Roichman, Standard {Y}oung tableaux, in
{\em Handbook of enumerative combinatorics}, 
CRC Press, Boca Raton, FL, 2015, 895--974.


\bibitem[AHRV]{AHRV}
O.~Angel, A.~E.~Holroyd, D.~Romik and B.~Vir\'ag,
Random sorting networks, \emph{Adv.\ Math.}~\textbf{215}
(2007), 839--868.

\bibitem[AsR]{AsR}
R.~A.~Askey and R.~Roy, Barnes $G$-function, in
\emph{NIST Handbook of Mathematical Functions}
(F.~W.~J.~Olver et al., eds.), Cambridge Univ.\ Press, 2012.

\bibitem[Bet]{Betea}
D.~Betea, \emph{Elliptic Combinatorics and Markov Processes},
Ph.D.\ thesis, Caltech, 2012, 116~pp.; available at
\ts \href{http://thesis.library.caltech.edu/7115/1/betea_thesis.pdf}{https://tinyurl.com/yy37sjvb}

\bibitem[BGR]{BGR}
A.~Borodin, V.~Gorin and E.~M.~Rains,
{$q$}-distributions on boxed plane partitions,
{\em Selecta Math.}~{\bf 16} (2010), 731--789.

\bibitem[CPT]{CPT}
N.~Chandgotia, I.~Pak and M.~Tassy,
Kirszbraun--type theorems for graphs,
\emph{J.\ Combin.\ Theory, Ser.~B} \textbf{137} (2019), 10--24.

\bibitem[CEP]{CEP96}
H.~Cohn, N.~Elkies and J.~Propp,
Local statistics for random domino tilings of the {A}ztec diamond,
{\em Duke Math.~J.}~\textbf{85} (1996), 117--166.

\bibitem[CKP]{CKP01}
H.~Cohn, R.~Kenyon and J.~Propp,
A variational principle for domino tilings.
{\em J.~AMS}~\textbf{14} (2001), 297--346.

\bibitem[Dau]{Dau}
D.~Dauvergne,
The Archimedean limit of random sorting networks, preprint (2018), 61~pp.;
\ts
{\tt arXiv:} {\tt 1802.08934}.

\bibitem[DK]{DK}
E.~Dimitrov and A.~Knizel,
Log-gases on quadratic lattices via discrete loop equations
and $q$-boxed plane partitions,
\emph{J.\ Funct.\ Anal.}~\textbf{276} (2019), 3067--3169.

\bibitem[DF]{DF}
J.~Dousse and V.~F\'eray,
Asymptotics for skew standard Young tableaux via bounds for
characters, \emph{Proc.\ AMS}~\textbf{147} (2019), 4189--4203.


\bibitem[Feit]{Feit}
W.~Feit,
The degree formula for the skew-representations of the symmetric group,
\emph{Proc.~AMS}~\textbf{4} (1953), 740--744.

\bibitem[FeS]{FeS}
 V.~F\'eray and P.~\'Sniady,
Asymptotics of characters of symmetric groups related to
Stanley character formula, {\em Ann.\ of Math.}~\textbf{173}
(2011), 887--906.

\bibitem[Gor]{Gor} A.~Gordenko, Limit shapes of large skew Young tableaux
and a modification of the TASEP process, preprint (2020), 43~pp.;
\ts {\tt  arXiv:2009.10480}.

\bibitem[Ken]{Ken09}
R.~Kenyon, Lectures on dimers, in \emph{Statistical mechanics},
AMS, Providence, RI, 2009, 191--230.

\bibitem[Kon]{Kon}
M.~Konvalinka, A bijective proof of the hook-length formula for skew
shapes, \emph{European\ J.~Combin.}~\textbf{88} (2020), 103104, 14~pp.


\bibitem[MT]{MT}
G.~Menz and M.~Tassy, A variational principle for a
non-integrable model, \emph{Probab.\ Theory Related
Fields}~\textbf{177} (2020), 747--822.

\bibitem[MPP1]{MPP1}
A.~H.~Morales, I.~Pak and G.~Panova,
Hook formulas for skew shapes I. $q$-analogues and bijections,
\emph{J.~Combin. Theory, Ser.~A}~\textbf{154} (2018), 350--405.

\bibitem[MPP2]{MPP2}
A.~H.~Morales, I.~Pak and G.~Panova,
Hook formulas for skew shapes II. Combinatorial proofs and enumerative applications,
\emph{SIAM Jour.\ Discrete Math.}~\textbf{31} (2017), 1953--1989.

\bibitem[MPP3]{MPP3}
A.~H.~Morales, I.~Pak and G.~Panova,
Hook formulas for skew shapes III. Multivariate and product formulas,
\emph{Algebraic Combinatorics}~\textbf{2} (2019), 815--861.

\bibitem[MPP4]{MPP4}
A.~H.~Morales, I.~Pak and G.~Panova,
Asymptotics of the number of standard Young tableaux of skew shape,
\emph{European\ J.~Combin}~\textbf{70} (2018), 26--49.

\bibitem[MZ]{MZ}
A.~H.~Morales, and D.~G.~Zhu, 
On the Okounkov-Olshanski formula for standard tableaux of skew shapes, preprint (2020), 37~pp.;
\ts {\tt arXiv:2007.05006}

\bibitem[Nar]{Nar}
H.~Naruse, {S}chubert calculus and hook formula, talk slides at
\emph{73rd {S}\'em.\ {L}othar.\ {C}ombin.},
Strobl, Austria, 2014; available at \ts
\href{https://www.emis.de/journals/SLC/wpapers/s73vortrag/naruse.pdf}{https://tinyurl.com/y5zunwk4}

\bibitem[NO]{NO}
H.~Naruse and S.~Okada,
Skew hook formula for $d$-complete posets,
\emph{Algebraic Combinatorics}~\textbf{2} (2019), 541--571.

\bibitem[Pak1]{Pak-ICM}
I.~Pak, Complexity problems in enumerative combinatorics,
in \emph{Proc.\ ICM Rio de Janeiro}, Vol.~IV. Invited lectures, World Sci., Hackensack, NJ, 2018, 3153--3180;
expanded version available at \ts {\tt arXiv:1803.06636}.

\bibitem[Pak2]{Pak-skew}
I.~Pak, Skew shape asymptotics, a case-based introduction,
preprint (2020), 19~pp.; available at \ts
\href{https://www.math.ucla.edu/~pak/papers/Ribbon1.pdf}{https://tinyurl.com/y54opehb}

\bibitem[PST]{PST}
I.~Pak, A.~Sheffer, and M.~Tassy, Fast domino tileability,
{\em Discrete Comput.\ Geom.}~\textbf{56} (2016), 377--394.

\bibitem[PR]{PR}
B.~Pittel and D.~Romik,
Limit shapes for random square Young tableaux,
\emph{Adv.\ Appl.\ Math.}~\textbf{38} (2007), 164--209.


\bibitem[Rom]{Rom}
D.~Romik,
\emph{The surprising mathematics of longest increasing subsequences},
Cambridge Univ.~Press, New York, 2015.

\bibitem[Sta1]{Stanley_skewSYT}
R.~P.~Stanley, On the enumeration of skew {Y}oung tableaux,
{\em Adv.\ Appl.\ Math.}~\textbf{30} (2003), 283--294.


\bibitem[Sta2]{Stanley-EC}
R.~P.~Stanley, {\em Enumerative Combinatorics}, vol.~1
and~2,
Cambridge Univ.~Press, 2012 and~1999.

\bibitem[Sun]{Sun}
W.~Sun, Dimer model, bead and standard Young tableaux:
finite cases and limit shapes, preprint (2018), 67 pp.~;
\ts {\tt arXiv:1804.03414}.

\bibitem[Thu]{Thu}
W.~P.~Thurston,
Groups, tilings and finite state automata,
in \emph{Lecture Notes}, AMS Summer Meetings, Bolder, CO,
1989, 51~pp.; available at \ts
\href{http://timo.jolivet.free.fr/docs/ThurstonLectNotes.pdf}{https://tinyurl.com/y62mzl4r}

\bibitem[TM]{thurston1979geometry}
W.~P. Thurston and J.~W. Milnor,
{\em The geometry and topology of three-manifolds},
Princeton Univ.\ Press, Princeton, NJ, 1979.

\bibitem[MW]{}
M.~Wachs, Flagged {S}chur functions, {S}chubert polynomials, and symmetrizing operators, {\em J. Combin. Theory,
Ser. A}~\textbf{40} (1985), 276--289.

\vskip.7cm
\end{thebibliography}

\end{document}